\documentclass[12pt,reqno]{amsart}
\usepackage[headings]{fullpage}
\usepackage{amssymb,amsmath,amscd,bbm,tikz-cd}
\usepackage[all,cmtip]{xy}
\usepackage{url}
\usepackage[bookmarks=true,%
    colorlinks=true,%
    linkcolor=blue,
    citecolor=blue,%
    filecolor=blue,%
    menucolor=blue,%
    urlcolor=blue,%
    breaklinks=true]{hyperref}
\usepackage{slashed}
\usepackage{listings}
\usepackage{verbatim}
\usepackage{mathtools}
\usepackage[normalem]{ulem}   
\usepackage{graphicx}
\usepackage{texdraw}
\usepackage{caption}
\graphicspath{{figures/}}

\newtheorem{theorem}{Theorem}[section]
\theoremstyle{definition}

\newtheorem{lemma}[theorem]{Lemma}

\newtheorem{remark}[theorem]{Remark}
\newtheorem{corollary}[theorem]{Corollary}
\newtheorem{conjecture}[theorem]{Conjecture}

\newtheorem*{question*}{Question}

\def\BC{\mathbbm C}

\def\calR{\mathcal R}

\def\calG{\mathcal G}
\def\calX{\mathcal X}

\def\SL{\mathrm{SL}}

\def\tr{\mathrm{tr}\,}

\begin{document}
	
\title[ ]{Knots with large character varieties}

\author{Philip Choi}
\address{Quantum Computing Research Center\\
	Chonnam National University \\
	Gwangju, South Korea 
}
\email{philip94@snu.ac.kr}

\author{Joan Porti}
\address{Departament de Matemàtiques\\
	Universitat Autònoma de Barcelona and Centre de Recerca Matemàtica (CRM)\\
	Cerdanyola del Vallès, Spain
    \newline{\tt \url{https://mat.uab.cat/~porti/}}
}
\email{joan.porti@uab.cat}

\author{Seokbeom Yoon}
\address{Department of Mathematics \\
	Chonnam National University \\
	Gwangju, South Korea 
	\newline{\tt \url{https://sites.google.com/view/seokbeom}}
}
\email{sbyoon15@gmail.com}

\keywords{Character varieties, $\calX$-large knots, Tangle replacements, Orientation-reversing involutions, Thurston's Dimension Theorem, Turk's head knots, Strongly positive amphichiral knots}

\date{\today}

\begin{abstract}
    We study knots whose $\mathrm{SL}_2(\mathbb{C})$-character varieties have a component of dimension greater than one. We call such knots $\mathcal{X}$-large and introduce two diagrammatic constructions that produce $\mathcal{X}$-large knots. The first construction uses split link diagrams and rational tangle replacements, providing a topological explanation for most $\mathcal{X}$-large knots observed in knot tables. The second construction is based on braids and orientation-reversing involutions, and is motivated by a detailed analysis of the knot $10_{123}$, also known as the Turk's head knot $Th(3,5)$. In particular, this approach applies to Turk’s head knots $Th(p,q)$ with $p$ and $q$ odd, leading us to conjecture that all such knots are $\mathcal{X}$-large. In doing so, we also present a non-orientable analogue of Thurston's theorem giving a lower bound on the dimension of character varieties of non-orientable 3-manifolds.
\end{abstract}

\maketitle
{
\footnotesize
\tableofcontents
}

\section{Introduction}

Let $K$ be a knot in $S^3$. Here and throughout the paper, we distinguish knots from links: a knot has a single component, whereas a link has more than one component. We denote by $\calG(K):=\pi_1(S^3 \setminus K)$ the knot group of $K$, and by $\calX(K)$ the $\SL_2(\BC)$-character variety of $\calG(K)$,
$$\calX(K) := \mathrm{Hom}(\calG(K), \SL_2(\BC))/\!/_{\SL_2(\BC)}.$$
Character varieties of knot groups have been extensively studied in the literature. However, most existing results focus on the case in which $\calX(K)$ has dimension one. This is partly because, when $K$ is hyperbolic, the geometric representation lies in a one-dimensional component of $\calX(K)$, and partly because many knots (particularly those with few crossings) appear to have one-dimensional character varieties.

In this paper, we study knots $K$ for which $\dim \calX(K)>1$. Here, $\dim$ denotes the (complex) dimension of a component of maximal dimension. We call such knots \emph{$\calX$-large}. It is well known that if $K$ is $\calX$-large, then $K$ is large, meaning that its complement has a closed essential surface \cite{cooper1994plane}. However, the converse does not hold: there exist large knots that are not $\calX$-large, such as $8_{16}$ and $8_{17}$ \cite{knotinfo}. According to computational data in \cite{diagramsite}, obtained in joint work of the first author with Seonhwa Kim on parabolic representations, there are only three candidates in Rolfsen’s table with at most 10 crossings that can be $\mathcal{X}$-large: $10_{98}$, $10_{99}$, and $10_{123}$ (see Appendix~\ref{sec.app} for details). One motivation of this paper is to determine whether these knots are indeed $\calX$-large and, if so, to provide a topological explanation for this phenomenon, rather than relying on brute-force computations of character varieties. To the author’s knowledge, existing related results (for instance, \cite{cooper1996remarks, paoluzzi2013non,chen2025high}) do not apply to these knots.

A main goal of the paper is to introduce two new diagrammatic constructions that produce ${\calX}$-large knots. 

The first construction uses split link diagrams and 2-tangle replacements. We present this construction explicitly in Section~\ref{sec.generalization} (Theorem~\ref{thm.replace}), after explaining the key idea using the knot $10_{98}$ as a toy example in Section~\ref{sec.toyexample}. This construction is simple yet powerful: it produces infinitely many $\calX$-large knots from a fixed link diagram. In fact, most $\calX$-large knots observed in our computation arise from this construction. For example, it provides a diagrammatic explanation of why $10_{98}$ and $10_{99}$ are $\mathcal{X}$-large.
Unfortunately, at present we do not know of a similar explanation for $10_{123}$; however, our second construction applies to $10_{123}$.

The second construction uses braids and orientation-reversing involutions. It was motivated by the observation that the knot $10_{123}$ admits a braid presentation that decomposes into two parts, one of which is conjugate to the other by an orientation-reversing involution (see Figure~\ref{fig.10123}). We present a detailed analysis of $10_{123}$ in Section~\ref{sec.10123} and then generalize this construction in  Section~\ref{sec.sym} (Theorem~\ref{thm.sym}). This construction is applied in particular to Turk's head knots $Th(p,q)$ with $p$ and $q$ odd, which leads us to conjecture that all such Turk's head knots are $\calX$-large (Conjecture~\ref{conj.turk}).
We also present a variant of Theorem~\ref{thm.sym} that applies to strongly positive amphichiral knots (Theorem~\ref{thm.spak}).

The proof of Theorem~\ref{thm.sym} requires a non-orientable analogue of Thurston's theorem giving a lower bound on the dimension of character varieties \cite[Theorem 5.6]{ThurstonNotes}. Roughly speaking, Thurston's theorem asserts that under mild assumptions,  a (connected) orientable 3-manifold with $k$ boundary tori has a character variety of dimension at least $k$. For Theorem~\ref{thm.sym}, we require a corresponding statement for non-orientable 3-manifolds with Klein bottle boundary.
To the best of the authors’ knowledge, such a non-orientable version does not appear elsewhere in the literature. We therefore postpone the proof of Theorem~\ref{thm.sym} to Section~\ref{sec.nonori}, where we formulate and prove the dimension estimation in the non-orientable setting (Theorem~\ref{Theorem:DimensionEstimate}).

Using our constructions, we are able to account for all $\mathcal{X}$-large knots observed in our computations. In particular, as mentioned above, we confirm that $10_{98}$, $10_{99}$, and $10_{123}$ are $\calX$-large. Consequently, we obtain the following statement:
\begin{center}
Up to 10 crossings, the only $\mathcal{X}$-large knots are $10_{98}$, $10_{99}$, and $10_{123}$.
\end{center}
This naturally leads to the question of whether our constructions capture all $\mathcal{X}$-large knots up to a certain crossing number. However, we do not attempt to address this question in this paper, as determining the list of $\mathcal{X}$-large knots requires computing full character varieties and their dimensions, which is extremely difficult even for knots with 11 crossings.

\section{Rational tangle replacements}
\label{sec.tangle}

In this section, we present rational tangle replacements that give rise to $\mathcal{X}$-large knots.
We start with the knot $10_{98}$ as a toy example and then  explain the construction in generality.
\subsection{Toy example: the knot $10_{98}$}  
\label{sec.toyexample}

Let $L$ be the split link consisting of the knot $3_1$ and the unknot. Here, split means that there exists  a 2-sphere separating the unknot from $3_1$.
We fix a diagram of $L$ with a crossing $c$ and two Wirtinger generators $g$ and $h$  around $c$ shown as in Figure~\ref{fig.1098}~(left). Note that the unknot component is colored red.

\begin{figure}[!htbp]
    \centering
\begingroup%
  \makeatletter%
  \providecommand\color[2][]{%
    \errmessage{(Inkscape) Color is used for the text in Inkscape, but the package 'color.sty' is not loaded}%
    \renewcommand\color[2][]{}%
  }%
  \providecommand\transparent[1]{%
    \errmessage{(Inkscape) Transparency is used (non-zero) for the text in Inkscape, but the package 'transparent.sty' is not loaded}%
    \renewcommand\transparent[1]{}%
  }%
  \providecommand\rotatebox[2]{#2}%
  \newcommand*\fsize{\dimexpr\f@size pt\relax}%
  \newcommand*\lineheight[1]{\fontsize{\fsize}{#1\fsize}\selectfont}%
  \ifx\svgwidth\undefined%
    \setlength{\unitlength}{343.55366144bp}%
    \ifx\svgscale\undefined%
      \relax%
    \else%
      \setlength{\unitlength}{\unitlength * \real{\svgscale}}%
    \fi%
  \else%
    \setlength{\unitlength}{\svgwidth}%
  \fi%
  \global\let\svgwidth\undefined%
  \global\let\svgscale\undefined%
  \makeatother%
  \begin{picture}(1,0.37086103)%
    \lineheight{1}%
    \setlength\tabcolsep{0pt}%
    \put(0,0){\includegraphics[width=\unitlength,page=1]{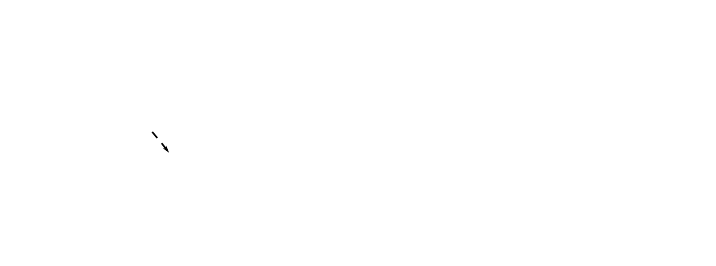}}%
    \put(0.24161356,0.15284313){\makebox(0,0)[lt]{\lineheight{1.25}\smash{\begin{tabular}[t]{l}$g$\end{tabular}}}}%
    \put(0,0){\includegraphics[width=\unitlength,page=2]{1098.pdf}}%
    \put(0.1333721,0.16525682){\makebox(0,0)[lt]{\lineheight{1.25}\smash{\begin{tabular}[t]{l}$g$\end{tabular}}}}%
    \put(0,0){\includegraphics[width=\unitlength,page=3]{1098.pdf}}%
    \put(0.1333721,0.16525682){\makebox(0,0)[lt]{\lineheight{1.25}\smash{\begin{tabular}[t]{l}$g$\end{tabular}}}}%
    \put(0.14497846,0.19801344){\makebox(0,0)[lt]{\lineheight{1.25}\smash{\begin{tabular}[t]{l}$h$\end{tabular}}}}%
    \put(0.20166947,0.13212597){\makebox(0,0)[lt]{\lineheight{1.25}\smash{\begin{tabular}[t]{l}$c$\end{tabular}}}}%
    \put(0.21274523,0.09775105){\makebox(0,0)[lt]{\lineheight{1.25}\smash{\begin{tabular}[t]{l}$g^{-1}hg$\end{tabular}}}}%
    \put(0,0){\includegraphics[width=\unitlength,page=4]{1098.pdf}}%
  \end{picture}%
\endgroup%

    \caption{The split link $3_1 \sqcup O$ and the knot $10_{98}$.}
    \label{fig.1098}
\end{figure}

Let $\rho$ be a representation of $\calG(L)=\pi_1(S^3 \setminus L)$ whose restriction to $\calG(3_1)$ is irreducible and such that $G=\rho(g)$ and $H=\rho(h)$ are of the form
\begin{equation}
\label{eqn.normalization}
    G = \begin{pmatrix} m & 1 \\ 0 & m^{-1} \end{pmatrix}, \quad
    H = \begin{pmatrix} m & 0 \\ u & m^{-1} \end{pmatrix} \,.
\end{equation}
Such $\rho$ exists for any non-zero complex numbers $m$ and $u$, since $h$ is a meridian of the trivial component of the split link $L$, and $g$ is a meridian of the other non-trivial component.

We assume that $(u,m)$ is a solution to the Riley polynomial of $3_1$ 
(\cite{Riley1984}): 
\begin{equation}
\label{eqn.R} u+m^2-1+m^{-2}=0.    
\end{equation}
This means that the matrices $G$ and $H$ correspond to a non-abelian representation of $3_1$.  More precisely, the knot group of $3_1$ has two generators $x$ and $y$ with one relator $xyx=yxy$, and the assignment $x \mapsto G$ and $y \mapsto H$ defines a representation, if Equation~\eqref{eqn.R} is satisfied. Put differently, Equation~\eqref{eqn.R} implies that $GHG = HGH$.

On the other hand, a small circle around the crossing $c$ meets the diagram of $L$ at four points, corresponding to four Wirtinger generators (two of which coincide).
These generators are mapped to $G,H,G$, and $G^{-1}HG$, under the representation $\rho$, as in Figure~\ref{fig.trefoil}~(left).
Since the matrices $G$ and $H$ satisfy $GHG=HGH$, it follows that $HGH^{-1}=G^{-1}HG$ and $HGHG^{-1}H^{-1}=G$. Hence, one may replace the crossing $c$ with the rational $2$-tangle shown in Figure~\ref{fig.trefoil}~(right), keeping the outside of the circle unchanged. This operation yields the knot $10_{98}$ (see Figure~\ref{fig.1098}) and shows that the representation $\rho$ of $\calG(L)$ induces a representation of $\calG(10_{98})$.

\begin{figure}[!htbp]
    \centering
    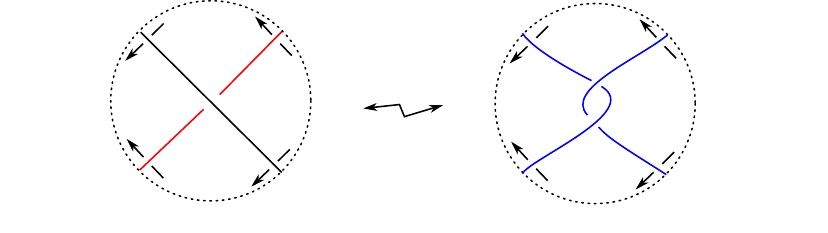
    \caption{Replacement of a crossing by a 2-tangle.}
    \label{fig.trefoil}
\end{figure}

We now consider matrices $A$ whose conjugation actions fix the matrix $G$, i.e., $AGA^{-1}=G$. 
Explicitly, for a non-zero complex number $t$, one has
\begin{equation}
\label{eqn.A}
A= \begin{pmatrix}
 t & \frac{t-t^{-1}}{m-m^{-1}} \\
 0 & t^{-1}
    \end{pmatrix} \ \text{ if } m^2 \neq 1, \quad \text{and} \quad 
    A= \begin{pmatrix}
 m & t \\
 0 & m^{-1}
    \end{pmatrix} \ \text{ if } m^2 = 1 \,.
\end{equation}
Letting $H^A:=AHA^{-1}$, one has $GH^AG = H^AGH^A$.
Using the matrices $H^A$ in place of $H$, the aforementioned tangle replacement  produces representations of $\calG(L)$, each of which induces a  representation of $\calG(10_{98})$. 
By deforming $m$ and $t$, we obtain a 2-dimensional family of representations of $\calG(10_{98})$, and one easily checks that all of these representations have distinct conjugacy classes. This explains that the character variety of $10_{98}$ has a component of dimension greater than~1. That is, the knot $10_{98}$ is $\calX$-large.

\subsection{General construction} 
\label{sec.generalization}

In this section, we generalize the tangle replacement discussed in the previous section by using an arbitrary rational 2-tangle diagram instead of the specific one shown in Figure~\ref{fig.trefoil}.

Given a diagram of a knot or link, \emph{replacing a crossing $c$ with a 2-tangle diagram $R$} means removing a small disc containing $c$ and inserting $R$ into it, while keeping the exterior of the disc unchanged; see Figure~\ref{fig.replace}. The boundary of the disc intersects the diagram at four points. We label two of them by $o$ (resp., $u$) if they are connected by the over-pass (resp., under-pass) of $c$. We then define the \emph{$c$-closure} of $R$ as the knot or link obtained from $R$ by joining the two $o$-points and the two $u$-points, creating one crossing outside the disc in which the $o$-points (resp., $u$-points) are connected by the over-pass (resp., under-pass); see Figure~\ref{fig.closure}. For example, the $c$-closure of the 2-tangle diagram in Figure~\ref{fig.trefoil}~(right) is $3_1$.

\begin{figure}[!h]
    \centering
    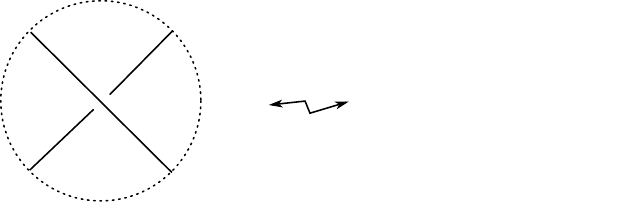
    \caption{Replacement of a crossing by a 2-tangle.}
    \label{fig.replace}
\end{figure}

\begin{theorem}
\label{thm.replace}
Let $L$ be a split link consisting of a non-trivial knot $T$ and an unknot. Fix a diagram of $L$ such that there is a crossing $c$ at which $T$ intersects with the unknot. Suppose 
\begin{itemize}
    \item that a knot $K$ is obtained by replacing the crossing $c$ with a rational 2-tangle diagram $R$, and
    \item that the $c$-closure of $R$ is a non-trivial (two-bridge) knot. 
\end{itemize}
Then $K$ is $\calX$-large.
\end{theorem}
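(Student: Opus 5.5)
We generalize the toy example of Section~\ref{sec.toyexample}. Let $C$ be the $c$-closure of $R$, a non-trivial two-bridge knot. Its group is generated by the two Wirtinger generators at the outside crossing, equivalently by the two strands entering the disc at the $o$- and $u$-points. The outer crossing of the $c$-closure is of the same type as $c$, so the Wirtinger relation there has the same shape as the relation at $c$: the strands leaving the disc carry $G$ and $G^{-1}HG$ (or $GHG^{-1}$, depending on the sign). Since $C$ is a non-trivial two-bridge knot, it has a Riley polynomial $\phi_C(u,m)$ whose zero set is a curve of non-abelian representations. At a point of this curve, the assignment of $G,H$ as in \eqref{eqn.normalization} to the two incoming strands extends to a representation of the tangle group of $R$ in which the two outgoing strands are sent to exactly the matrices prescribed by the crossing $c$. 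This is the general version of the identities $HGH^{-1}=G^{-1}HG$ and $HGHG^{-1}H^{-1}=G$ used for $3_1$.

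Next we use the split link. On $\calG(L)=\calG(T)*\calG(O)$ we pick an irreducible representation $\rho_T$ of $T$; it exists because $T$ is non-trivial, for instance by Kronheimer--Mrowka, or at least for the relevant $m$ via Riley-type arguments. We conjugate so that the meridian $g$ at $c$ maps to the upper triangular $G$ with eigenvalue $m$. Then $h$, a meridian of the unknot, may be sent to any matrix with trace $m+m^{-1}$, in particular to $H$ or to $H^A=AHA^{-1}$ with $A$ in the centralizer of $G$ as in \eqref{eqn.A}. We choose $u$ on the Riley curve of $C$ for the given $m$. Replacing $c$ by $R$ changes only the disc, and by the previous paragraph the boundary matrices agree, so we obtain a representation of $\calG(K)$. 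Wirtinger presentations glue along the four boundary arcs, so this is just van Kampen. Conjugating the unknot part by $A$ keeps $GH^AG$-type relations valid, since the tangle relations are conjugation-equivariant under the centralizer of $G$.

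The parameters are $m$, which varies along a one-dimensional family, together with $t\in\BC^*$. We must check that this family has two-dimensional image in $\calX(K)$, and I expect this to be the main obstacle. The plan is to use trace functions. The character of $\rho_T$ restricted to the subgroup coming from $T$ is fixed up to the choice of $m$. Meanwhile $\tr(\rho(w)H^A)$, for some $w$ in the $T$-part with $\rho_T(w)$ not commuting with $G$, varies non-trivially in $t$: irreducibility of $\rho_T$ ensures some $\rho_T(w)$ is not upper triangular, which gives a non-constant rational function of $t$. We also need the $T$-part to genuinely depend on $m$, via $\tr$ of the meridian, so that the map $(m,t)\mapsto$ character has rank $2$ at a generic point. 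A subtle point is that the $T$-portion of $K$ is still a subgroup image after the replacement: the Wirtinger generators of the arcs outside the disc on the $T$ side map to $\rho_T$, so their traces are unchanged. Finally, a two-dimensional irreducible family lies in a component of dimension at least $2$, so $K$ is $\calX$-large.
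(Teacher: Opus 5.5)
Your construction of the representations is the same as the paper's: Riley representations of the $c$-closure let you replace $c$ by $R$, the image of $h$ is chosen freely in the split link group, $H$ is deformed to $H^A$ by the centralizer of $G$, and $\tr(G'H^A)$ separates characters. The gap is in the second dimension. Everything rests on your sentence that $m$ ``varies along a one-dimensional family''. That is, you need irreducible representations $\rho_T$ of $\calG(T)$ to exist for a one-parameter family of meridian eigenvalues, with the $T$-part of the character actually changing.

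Nothing you cite gives this. Kronheimer--Mrowka produce irreducible representations only at special meridian traces (for example traceless $\mathrm{SU}(2)$ ones). ``Riley-type arguments'' apply to two-bridge knots, not to an arbitrary non-trivial $T$. Even Thurston's dimension bound, which puts $\chi_{\rho_T}$ on a curve in $\calX(T)$, does not by itself stop the meridian trace from being constant on that curve. In that case your rank-two argument via $(\tr\mu,\ \tr(\rho(w)H^A))$ collapses. Without this input you have only the $t$-family at the meridian traces where you know an irreducible $\rho_T$ exists, which is a curve and does not give $\calX$-largeness.

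The paper fills exactly this step with the non-triviality of the $A$-polynomial (Dunfield--Garoufalidis, Boyer--Zhang, resting on Kronheimer--Mrowka). It gives a curve of irreducible characters of $T$ on which the meridian trace is non-constant, so $\tr G$ takes infinitely many values. Each value yields a $t$-curve in $\calX(K)$. These curves are distinct because $g$ becomes a meridian of the knot $K$ and so $\tr G$ is its meridian trace. Since $\calX(K)$ has finitely many components, one of them has dimension at least $2$.

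Alternatively, you could repair your own argument without the $A$-polynomial. Let $Y\subset\calX(T)$ be a curve through $\chi_{\rho_T}$ given by Thurston's bound, and parametrize by $\chi\in Y$ instead of by $m$, taking the Riley root $u$ at the meridian trace of $\chi$. The character of $K$ determines that of $L$ (the matrices on the arcs outside the disc agree and generate). Hence it determines the restriction $\chi$ to $\calG(T)$. Together with the $t$-variation, this gives a two-dimensional family whether or not $\tr\mu$ is constant on $Y$.
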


\begin{proof}
Let $R_c$ be the two-bridge knot obtained by the $c$-closure of $R$. It is proven in \cite{Riley1984} that there is a two-variable polynomial $P$ such that two matrices $G$ and $H$ with the same trace correspond to a non-abelian representation of $R_c$ if  $P(\tr G, \tr HG)=0$. Precisely, the assignment sending two Wirtinger generators lying above the tangle $R$ to $G$ and $H$, shown as in Figure~\ref{fig.closure}, defines a non-abelian representation of $\calG(R_c)$ if $P(\tr G,\tr HG)=0$. Note that the two Wirtinger generators lying below $R$ are mapped to $G^{-1}HG$ and $G$ under the representation, hence one may replace $R$ with the crossing in Figure~\ref{fig.replace}~(left).

\begin{figure}[!h]
    \centering
    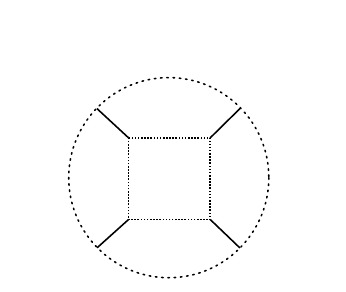
    \caption{The $c$-closure of a rational 2-tangle $R$.}
    \label{fig.closure}
\end{figure}

We now consider the given diagram of the split link $L$.
Let $g$ and $h$ be Wirtinger generators lying above the crossing $c$ that are associated with the over-pass and under-pass of $c$, respectively. We may assume that $h$ is a meridian of the unknot component of $L$, and $g$ is a meridian of the other component $T$; otherwise, we flip the diagram of $L$.

As $L$ is a split link and as $h$ is a meridian of the unknot component, a representation of $\calG(L)$ is determined by a representation of $\calG(T)$ together with a single matrix assigned to $h$, which can be chosen freely. It follows that there exists a representation $\rho$ of $\calG(L)$ whose restriction to $\calG(T)$ is irreducible and such that $G=\rho(g)$ and $H=\rho(h)$ satisfy $\tr G = \tr H$ and $P(\tr G, \tr HG)=0$.
As explained, such $\rho$ induces a representation of $\calG(K)$ by replacing the crossing $c$ with the 2-tangle $R$.

On the other hand, for fixed $G$, there is a 1-dimensional family of matrices $A$ satisfying $AGA^{-1}=G$. It follows that one can deform $H$ into $H^A=AHA^{-1}$ while keeping $G$ fixed. Using the matrices $H^A$ in place of $H$, we obtain a 1-dimensional family of representations of $\calG(L)$, each of which induces a representation of $\calG(K)$.
Since the restriction of $\rho$ to $\calG(T)$ is irreducible, there exists an element $g' \in \calG(T)$ such that the invariant subspace of $G'=\rho(g')$ is distinct from that of $G$. 
Since varying $A$ changes the trace of $G'H^A$, those induced representations of $\calG(K)$ have distinct conjugacy classes. 

The previous argument provides a curve in $\calX(K)$ for each value of the trace of $G$. As the $A$-polynomial is nontrivial  
(\cite{dunfield2004non, boden2014nontriviality, boyerzhang}),
we obtain arbitrarily many curves in $\calX(K)$.  
An algebraic variety has finitely many irreducible components, 
so  
a component of $\calX(K)$  has dimension greater than 1. 
\end{proof}

The above theorem is effective in producing $\calX$-large knots. For example, by taking $T$ to be the knot $3_1$, one can deduce that $10_{99}$, $11a_{132}$, $11a_{157}$, and $12a_{923}$ are $\calX$-large; see
Figure~\ref{fig.11a157}. 
\begin{figure}[!htbp]
    \centering
     \scalebox{0.9}{
\begingroup%
  \makeatletter%
  \providecommand\color[2][]{%
    \errmessage{(Inkscape) Color is used for the text in Inkscape, but the package 'color.sty' is not loaded}%
    \renewcommand\color[2][]{}%
  }%
  \providecommand\transparent[1]{%
    \errmessage{(Inkscape) Transparency is used (non-zero) for the text in Inkscape, but the package 'transparent.sty' is not loaded}%
    \renewcommand\transparent[1]{}%
  }%
  \providecommand\rotatebox[2]{#2}%
  \newcommand*\fsize{\dimexpr\f@size pt\relax}%
  \newcommand*\lineheight[1]{\fontsize{\fsize}{#1\fsize}\selectfont}%
  \ifx\svgwidth\undefined%
    \setlength{\unitlength}{411.46454578bp}%
    \ifx\svgscale\undefined%
      \relax%
    \else%
      \setlength{\unitlength}{\unitlength * \real{\svgscale}}%
    \fi%
  \else%
    \setlength{\unitlength}{\svgwidth}%
  \fi%
  \global\let\svgwidth\undefined%
  \global\let\svgscale\undefined%
  \makeatother%
  \begin{picture}(1,0.24216318)%
    \lineheight{1}%
    \setlength\tabcolsep{0pt}%
    \put(0,0){\includegraphics[width=\unitlength,page=1]{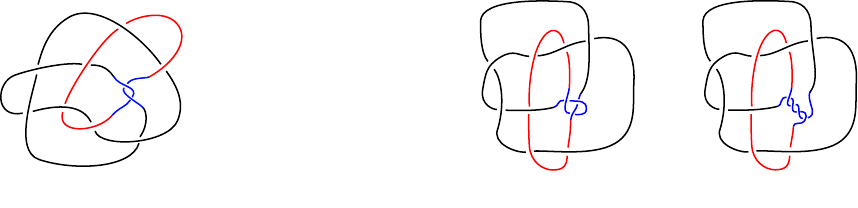}}%
    \put(0.07978629,0.00403043){\color[rgb]{0,0,0}\makebox(0,0)[lt]{\lineheight{1.25}\smash{\begin{tabular}[t]{l}$10_{99}$\end{tabular}}}}%
    \put(0.60757902,0.00349322){\color[rgb]{0,0,0}\makebox(0,0)[lt]{\lineheight{1.25}\smash{\begin{tabular}[t]{l}$11a_{157}$\end{tabular}}}}%
    \put(0.33126981,0.00494887){\color[rgb]{0,0,0}\makebox(0,0)[lt]{\lineheight{1.25}\smash{\begin{tabular}[t]{l}$11a_{132}$\end{tabular}}}}%
    \put(0.86952489,0.00714578){\color[rgb]{0,0,0}\makebox(0,0)[lt]{\lineheight{1.25}\smash{\begin{tabular}[t]{l}$12a_{923}$\end{tabular}}}}%
    \put(0,0){\includegraphics[width=\unitlength,page=2]{11a157.pdf}}%
  \end{picture}%
\endgroup%
}
    \caption{Examples of $\calX$-large knots.}
    \label{fig.11a157}
\end{figure}
    
\section{Braids and orientation-reversing involutions}
\label{sec.invol}

In this section, we construct $\calX$-large knots from braids and orientation-reversing involutions. 
We first explain how it works for the knot $10_{123}$ as a toy example.

\subsection{Toy example: the knot $10_{123}$}
\label{sec.10123}

Let $B_3$ be the braid group of 3 strands with standard generators $\sigma_1^{\pm1}$ and $\sigma_2^{\pm1}$. Let $b=\sigma_1 \sigma_2^{-1} \sigma_1 \sigma_2^{-1} \sigma_1$ and
$b^* = \sigma_2^{-1} \sigma_1 \sigma_2^{-1} \sigma_1\sigma_2^{-1}$. Then the closure of the composition $bb^*$ is the knot $10_{123}$. See Figure~\ref{fig.10123}; we  draw braids left-to-right, so the composition is horizontal. The braid
$b$ is related to $b^*$ under the correspondence
\begin{equation}
    \label{eqn.bij}
    \sigma_{1}^{\pm1} \longmapsto \sigma_{2}^{\mp1} \quad \text{and} \quad \sigma_{2}^{\pm1} \longmapsto \sigma_{1}^{\mp1}\,.
\end{equation}
Viewing braids as mapping classes of
the punctured disc relative to the boundary, via the isomorphism
\begin{equation*}
B_3\cong\mathrm{MCG}(D^2 \setminus \{p_1,p_2,p_3\},\  \partial D^2),
\end{equation*}
we have $b^\ast = \tau b \tau$ where $\tau$ is the reflection that fixes $p_2$ and interchanges $p_1$ and $p_3$. 

\begin{figure}[!htbp]
    \centering
     \scalebox{1}{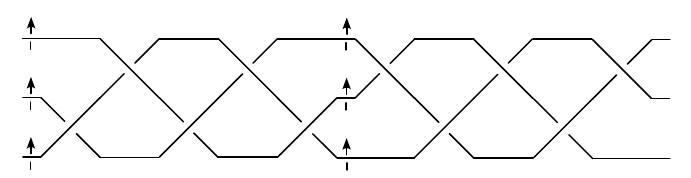}
    \caption{Braid presentation of the knot $10_{123}$.}
    \label{fig.10123}
\end{figure}

Let $g_i$ be the Wirtinger generators on the left of the braid $b$ and 
$h_i$ be those on the right for $i=1,2,3$. If each $g_i$ is assigned an $\SL_2(\BC)$-matrix $G_i$, then along the braid $b$, this assignment determines matrices $H_i$ corresponding to $h_i$.  Regarding the triples $(G_1,G_2,G_3)$ and $(H_1,H_2,H_3)$ as representations of the free group $F_3$ of rank 3, the map sending $(G_1,G_2,G_3)$ to $(H_1,H_2,H_3)$ defines an automorphism of the representation variety of $F_3$. Restricting our attention to the case where $(G_1,G_2,G_3)$ is irreducible, this induces an automorphism of $\calX^{\mathrm{irr}}(F_3)$ by sending $[G_1,G_2,G_3]$ to $[H_1,H_2,H_3]$. Here  $\calX^{\mathrm{irr}}(F_3)$ denotes the character variety  of irreducible representations of $F_3$, and the square brackets denote the conjugacy classes.

\begin{lemma}
    There exists a subvariety $U \subset \calX^{\mathrm{irr}}(F_3)$ whose dimension is at least 2 and such that each point $[G_1,G_2,G_3] \in U$ satisfies
    \begin{equation}
    \label{eqn.rev}
    \tr G_1 = \tr G_2 = \tr G_3, \quad \tr G_1 G_2 G_3 \neq \pm 2, \quad \text{and} \quad 
    [H_1,H_2,H_3] = [G_3^{-1},G_2^{-1},G_1^{-1}] .
\end{equation}
\end{lemma}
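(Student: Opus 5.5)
We will realize $U$ as a fixed-point set of an involution of $\calX^{\mathrm{irr}}(F_3)$ and bound its dimension from below, using the symmetry $b^*=\tau b\tau$. Let $\Phi_b$ be the automorphism $[G_1,G_2,G_3]\mapsto[H_1,H_2,H_3]$ induced by $b$. Let $\iota$ be the involution $[G_1,G_2,G_3]\mapsto[G_3^{-1},G_2^{-1},G_1^{-1}]$, which is the map induced by the reflection $\tau$ (it reverses orientation, so meridians are inverted and $p_1,p_3$ are swapped). The condition $[H_1,H_2,H_3]=[G_3^{-1},G_2^{-1},G_1^{-1}]$ says $\Phi_b(x)=\iota(x)$, that is, $x$ is a fixed point of $\iota\circ\Phi_b$. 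Braid relations conjugate through $\tau$, so $\tau b\tau=b^*$ gives $\iota\Phi_b\iota=\Phi_{b^*}$. Also $b^*=\tau b\tau$ is the inverse of $b$ up to the relevant symmetry; the intended check is that $(\iota\Phi_b)^2=\Phi_{b^*}\circ\ldots$. Rather than rely on that, I will work concretely.

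\textbf{Step 1: coordinates.} Restrict to the subvariety $V=\{\tr G_1=\tr G_2=\tr G_3=m+m^{-1}\}$, which has dimension $6-2=4$ for irreducible triples. Its coordinates are $m$, the traces $x_{ij}=\tr G_iG_j$, and $\tr G_1G_2G_3$, subject to one quadratic relation. The permutation part of $b$ is $(1\,3)$, since $\sigma_1\sigma_2^{-1}\sigma_1\sigma_2^{-1}\sigma_1$ induces the permutation $s_1s_2s_1s_2s_1=s_1s_2$ composed appropriately. Whatever this permutation is, equal traces make $V$ invariant under both $\Phi_b$ and $\iota$. The trace of the total product is preserved by braids, $\tr H_1H_2H_3=\tr G_1G_2G_3$. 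Under $\iota$ it becomes $\tr (G_1G_2G_3)^{-1}$, which is the same value.

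\textbf{Step 2: dimension count.} On the 4-dimensional space $V$, the equation $\Phi_b(x)=\iota(x)$ imposes at most as many conditions as there are independent coordinates not already automatically matched. The two maps agree on $m$ and on $\tr G_1G_2G_3$. The remaining coordinates are the three $x_{ij}$, modulo the quadratic relation that determines the last coordinate up to a sign choice. This naively gives 3 equations on a 4-dimensional space, hence dimension at least 1, which is not enough. The key point is therefore that the equations are dependent. I expect this to follow from $\iota\Phi_b$ being an involution on $V$, i.e.\ $\Phi_{b^*}\Phi_b\iota$ restricted appropriately. The fixed locus of an involution on a smooth 4-dimensional variety has components of dimension equal to the dimension of the $+1$-eigenspace of the differential. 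So the plan is to verify that $(\iota\Phi_b)^2=\mathrm{id}$ on $V$, using $b^*=\tau b\tau$ and the palindromic form of $b$. Here $b$ read backwards is itself, and $\tau$-conjugation combined with inversion should give $b^{-1}$. Then I locate one explicit fixed point, for example with $G_2$ symmetric and $G_3$ obtained from $G_1$ by an explicit conjugation. At that point I compute the trace of the differential of $\iota\Phi_b$: trace $0$ would give a $2$-dimensional fixed component, and trace $\ge 0$ gives at least 2.

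\textbf{Main obstacle.} The hardest step is the direct computation. I would carry it out by writing explicit Wirtinger actions $\sigma_i^{\pm1}$ on the trace coordinates $x_{12},x_{23},x_{13}$ (Markov-type polynomial maps), composing them five times, and solving $\Phi_b=\iota$. The expected outcome is that the system reduces to a single equation in $(m,x_{12},x_{23},x_{13})$, leaving two free parameters, $m$ and one trace. Finally, I remove the loci where $\tr G_1G_2G_3=\pm2$ or where the triple is reducible. These are proper closed subsets, and I must check that they do not contain the whole component, which I would do by evaluating at one point.
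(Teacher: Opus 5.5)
\textbf{There is a genuine gap: your route to the dimension bound rests on $\iota\circ\Phi_b$ being an involution, and it is not.}

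Your setup is right. On $V$ both $\Phi_b$ and $\iota$ preserve the common trace and $\tr G_1G_2G_3$. So the desired condition amounts to three trace equations on a $4$-dimensional variety:
\[
\tr H_1H_2=\tr G_2G_3,\quad \tr H_1H_3=\tr G_1G_3,\quad \tr H_2H_3=\tr G_1G_2 .
\]
You correctly see that everything hinges on a dependency among these three equations.

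The mechanism you propose for that dependency fails. From your own (correct) identity $\iota\Phi_b\iota=\Phi_{b^*}$ one gets $(\iota\Phi_b)^2=\Phi_{b^*}\circ\Phi_b$. This is the map induced by the whole braid $bb^*=(\sigma_1\sigma_2^{-1})^5$, whose closure is $10_{123}$. That braid is pseudo-Anosov: under the standard map $B_3\to\mathrm{SL}_2(\mathbb Z)$, $\sigma_1\mapsto\left(\begin{smallmatrix}1&1\\0&1\end{smallmatrix}\right)$, $\sigma_2\mapsto\left(\begin{smallmatrix}1&0\\-1&1\end{smallmatrix}\right)$, it goes to a hyperbolic matrix of trace $123$. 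It therefore acts on the cubic surfaces $P=0$ (the fibres of $V$ over the boundary data) as an automorphism of infinite order. In particular $(\iota\Phi_b)^2\neq\mathrm{id}$ on $V$.

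Likewise $b^*\neq b^{-1}$, even modulo the centre. Palindromicity only says that $b$ is fixed by the word-reversal anti-automorphism, which is not inversion.

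Without the involution property, the linear algebra at a fixed point runs the wrong way. For a general map $f$ one only has $T_p\,\mathrm{Fix}(f)\subset\ker(df_p-I)$, which is an \emph{upper} bound, so no trace or eigenspace computation can give $\dim\ge 2$. Your fallback (``the system should reduce to a single equation'') is an expectation, not an argument.

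The idea you are missing is the one the paper uses: the braid group action on $\BC[V]$ preserves the Fricke polynomial
\[
P=xyz+x^2+y^2+z^2-b(x+y+z)-c .
\]
Here $(x,y,z)=(\tr G_1G_2,\tr G_1G_3,\tr G_2G_3)$, and $b,c$ denote the paper's functions of $\tr G_1$ and $\tr G_1G_2G_3$, not the braid. The argument then goes as follows.
\begin{enumerate}
\item Write $(X,Y,Z)=(\tr H_1H_2,\tr H_1H_3,\tr H_2H_3)$ and impose only two of the three equations, $X=z$ and $Z=x$. Every component of the resulting locus $V_0$ has dimension at least $2$.
\item On $V_0$ the identity $P(x,y,z)=P(X,Y,Z)$ factors as $(Y-y)(Y-b+y+xz)=0$. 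So the third equation $Y=y$ holds automatically wherever the second factor is nonzero.
\item What remains is to show that this factor does not vanish identically on $V_0$. This is the paper's claim \eqref{eqn.claim}, which it checks by passing to a quotient ring.
\item Take a neighbourhood in $V_0$ of a point where the factor is nonzero, shrunk to avoid $\tr G_1G_2G_3=\pm2$ and reducible triples. This is $U$.
\item Equality of the traces of $g_i$, $g_ig_j$ and $g_1g_2g_3$ then gives $[H_1,H_2,H_3]=[G_3^{-1},G_2^{-1},G_1^{-1}]$.
\end{enumerate}
Your remark about a quadratic relation fixing ``the last coordinate up to a sign'' is in the right spirit. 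However, the two-valuedness has to come from the braid-invariant cubic $P$, not from the relation defining $\tr G_1G_2G_3$.
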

\begin{proof}
     The character variety of $F_3$ is 6-dimensional, hence the first condition in~\eqref{eqn.rev} defines a 4-dimensional subvariety $V$. Precisely, its coordinate ring $\BC[V]$ is given by $\BC[x,y,z,b,c]/P$ where 
$$P=xyz+x^2+y^2+z^2-b(x+y+z) - c.$$
Here the variables $x,y,z,b$ and $c$ are related to $G_1,G_2,G_3$ as follows: 
\begin{align*}
    & (x,y,z)= (\tr G_1 G_2, \ \tr G_1 G_3 , \ \tr G_2 G_3), \\
    &b=\tr G_1(\tr G_1+\tr G_1 G_2 G_3), \quad c = 4-3(\tr G_1)^2 -(\tr G_1 G_2 G_3)^2 - (\tr G_1)^3 \tr G_1 G_2 G_3.
\end{align*}
See, for instance,  \cite{goldman2009trace} for details.
Moreover, a direct computation shows that the braid group $B_3$ acts on $\BC[V]$ as follows:
\begin{align*}
    \sigma_1 \cdot (x,y,z,b,c) & = (x, \, z, \, b -y-xz,\, b,\, c); \\
    \sigma_1^{-1} \cdot (x,y,z,b,c) & = (x,\, b -z-xy,\, y,\, b,\, c);\\
    \sigma_2 \cdot (x,y,z,b,c) & = (y,\, b-x-yz,\, z,\, b,\, c);\\
    \sigma_2^{-1} \cdot (x,y,z,b,c) & = (b -y-xz,\, x,\, z,\, b,\,c).
\end{align*}
 Let $(X,Y,Z,b,c) := b \cdot (x,y,z,b,c)$ so that $(X,Y,Z)=(\tr H_1 H_2, \tr H_1 H_3, \tr H_2 H_3)$.

 We first claim that
\begin{equation}
    \label{eqn.claim}
    Y-b+y+xz \notin (X-z, \, Z-x)
\end{equation}
where the right-hand side is the ideal in $\BC[V]$ generated by $X-z$ and $Z-x$. To prove this, we quotient $\BC[V]$ by $(b,x^2,y^2,z^2)$. A straightforward computation then shows that $P=xyz-c$ and $(X,Y,Z)=(y,\,-x-yz,\,z)$ in the quotient ring. Consequently, $X-z=0$ and $Z-x=0$ force $x=y=z$. However, $Y-y=-x-y-yz =-2x\neq 0$, which proves the claim.

We now consider the subvariety $V_0$ of $V$ defined by $x=Z$ and $z=X$. Since $V$ is 4-dimensional, each component of  $V_0$ has dimension at least 2. As the actions of $\sigma_1^{\pm1}$ and $\sigma_2^{\pm1}$ preserve the polynomial $P$, we have
$$xyz+x^2+y^2+z^2-b(x+y+z) -c = XYZ+X^2+Y^2+Z^2-b(X+Y+Z)-c$$
and thus $(Y-y)(Y-b+y+xz)=0$ on $V_0$. 
It follows from~\eqref{eqn.claim} that there exists a point of $V_0$ at which $Y -b+y+xz \neq 0$. On a small neighborhood $U \subset V_0$ of that point, the same inequality holds, which forces $Y=y$. It follows that $(X,Y,Z)=(z,y,x)$ holds in $U$. In addition, we may assume that $\tr G_1 G_2 G_3 \neq \pm 2$ in $U$, as we have not specified $c$, which can vary  $\tr G_1 G_2 G_3$.
Summing up, the following equations holds in $U$:
\begin{align*}
     \tr G_1 &=\tr G_2 = \tr G_3, \\
    \tr H_1 H_2 &= \tr G_2 G_3 = \tr G_3^{-1} G_2^{-1}, \\ 
    \tr H_1 H_3 &= \tr G_1 G_3 = \tr G_3^{-1} G_1^{-1}, \\ 
    \tr H_2 H_3 &= \tr G_1 G_2 = \tr G_2^{-1} G_1^{-1},
\end{align*}
as well as $\tr G_1 G_2 G_3 \neq \pm 2$.

The equality $\tr G_1 =\tr G_2 = \tr G_3$ implies $\tr H_i = \tr G_j = \tr G_j^{-1}$ for any $i,j$, and the fact $G_1 G_2 G_3 = H_1 H_2 H_3$ implies $$\tr H_1 H_2 H_3 = \tr G_1 G_2 G_3=\tr G_3^{-1} G_2^{-1} G_1^{-1}.$$  It follows that two irreducible representations $(H_1,H_2,H_3)$ and $(G_3^{-1} , G_2^{-1}, G_1^{-1})$ have the same character. As irreducible representations are determined by their characters, we conclude that $[H_1,H_2,H_3] = [G_3^{-1}, G_2^{-1}, G_1^{-1}]$ holds in $U$. This completes the proof.
\end{proof}

The last equation in \eqref{eqn.rev} means that
\begin{equation}
H_1 = A G_3^{-1} A^{-1}, \quad  H_2 = A G_2^{-1} A^{-1},\quad H_3 = A G_1^{-1} A^{-1}
\end{equation}
for some $A \in \SL_2(\BC)$. We write these compactly as 
\begin{equation}
    \label{eqn.con3}
    (H_1,H_2,H_3) = A(G_3^{-1},G_2^{-1},G_1^{-1})A^{-1}.
\end{equation}

\begin{lemma} 
\label{lem.AA}
If $G_1G_2G_3 \neq \pm I$, then the matrix $A$ in~\eqref{eqn.con3} satisfies that $A^2 = -I$.
\end{lemma}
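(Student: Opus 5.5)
The plan is to apply the relation \eqref{eqn.con3} twice and then use the fact that an irreducible triple has a small centralizer.

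\emph{Step 1: the product.} Since a braid preserves the product of the Wirtinger generators, we have $H_1H_2H_3 = G_1G_2G_3$. Combining this with \eqref{eqn.con3} gives
\[
G_1G_2G_3 = A\,G_3^{-1}G_2^{-1}G_1^{-1}A^{-1} = A\,(G_1G_2G_3)^{-1}A^{-1}.
\]
So $A$ conjugates $M:=G_1G_2G_3$ to $M^{-1}$. Consequently $A^2$ commutes with $M$.

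\emph{Step 2: constraining $A^2$ through $M$.} The paper works on $U$, where $\tr M\neq\pm2$. There $M$ is diagonalizable with distinct eigenvalues $\lambda^{\pm1}$. Conjugating $M$ to $\mathrm{diag}(\lambda,\lambda^{-1})$, any $A$ with $AMA^{-1}=M^{-1}$ must swap the two eigenlines. Hence $A$ is antidiagonal, $A=\begin{pmatrix}0&a\\-a^{-1}&0\end{pmatrix}$, and therefore $A^2=-I$ directly.

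This is the route I would follow first. Note that it uses $\tr M\neq\pm2$, which holds on $U$, rather than only the hypothesis $M\neq\pm I$.

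\emph{The general case $M\neq\pm I$.} Here I also need to handle $M$ parabolic, $\tr M=\pm2$. Then $M=\pm\begin{pmatrix}1&s\\0&1\end{pmatrix}$ with $s\neq0$ after conjugation. An $A\in\SL_2(\BC)$ conjugating $M$ to $M^{-1}$ must fix the unique eigenline, so $A$ is upper triangular, and the condition forces the diagonal entries to satisfy $a^2=-1$, i.e.\ $a=\pm i$. Then $A=\begin{pmatrix}i&b\\0&-i\end{pmatrix}$ up to sign, and $A^2=-I$ again, since the trace of $A$ is $0$.

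In fact the uniform statement is simple: $AMA^{-1}=M^{-1}$ with $M\neq\pm I$ forces $\tr A=0$, and Cayley--Hamilton then gives $A^2=-I$. To prove $\tr A=0$, write $M^{-1}=(\tr M)I-M$. Conjugation invariance then gives $A\,(M-\tfrac{\tr M}{2}I)\,A^{-1}=-(M-\tfrac{\tr M}{2}I)$. So $A$ anticommutes with the nonzero traceless matrix $N=M-\tfrac{\tr M}{2}I$. For nonzero traceless $N$, one checks in a basis adapted to $N$ (diagonal or nilpotent Jordan form) that $AN=-NA$ forces $\tr A=0$. This is a short case check, done exactly as above.

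\emph{Main obstacle.} The main obstacle is just this case check. The key observation making it easy is that $A$ sends the eigen-structure of $M$ to that of $M^{-1}$, and the two cases above cover it completely.

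\emph{Irreducibility.} I do not expect irreducibility of $(G_1,G_2,G_3)$ to be needed for this lemma. It only ensures that $A$ is unique up to sign, which is what makes the statement "$A$ satisfies $A^2=-I$" well posed.
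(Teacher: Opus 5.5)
Your proof is correct and follows essentially the paper's route: you use $H_1H_2H_3=G_1G_2G_3$ to get $AMA^{-1}=M^{-1}$, put $M$ in diagonal or parabolic normal form, deduce $\tr A=0$, and conclude $A^2=-I$ by Cayley--Hamilton, as the paper does. Your anticommutation reformulation with $N=M-\tfrac{\tr M}{2}I$ is a tidy way to package the same case check, and you are right that irreducibility is not needed, since the conclusion holds for every $A$ with $AMA^{-1}=M^{-1}$.
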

\begin{proof}
If we conjugate each $G_i$ by a matrix $B$, then each $H_i$ is also conjugated by the same matrix $B$. Since Equation~\eqref{eqn.con3} implies that for $i=1,2,3$,
\begin{equation}
    BH_iB^{-1} = (BAB^{-1}) \, (BG_{4-i}^{-1}B^{-1}) \, (BA^{-1}B^{-1}),
\end{equation}
hence $A$ is also conjugated by $B$. In particular, the trace of $A$ remains unchanged.
On the other hand, since the relation $G_1G_2G_3 = H_1H_2H_3$ holds, one has
\begin{equation}
\label{eqn.aa2}
    H_1 H_2 H_3 = A (G_1 G_2G_3)^{-1} A^{-1} = A(H_1 H_2 H_3)^{-1} A^{-1} \,.
\end{equation}
Up to conjugation, we may assume that $H_1H_2H_3$ is upper-triangular: it is either
$$
\begin{pmatrix}
    m & 0 \\ 0 & m^{-1} 
\end{pmatrix}
\text{ for } m^2 \neq 1
\quad \text{or} \quad
\begin{pmatrix}
    \pm 1 & \alpha \\ 0 & \pm1 
\end{pmatrix}
\text{ for } \alpha \neq 0 \,.
$$
In any case, a straightforward computation shows that Equation~\eqref{eqn.aa2}, combined with the fact $G_1 G_2 G_3 = H_1 H_2 H_3 \neq \pm I$, forces the trace of $A$ to be 0, and we deduce that $A^2=-I$ from the Cayley-Hamilton formula.
\end{proof}

Recall that $b^\ast=\tau b \tau$ is related to $b$ under the correspondence~\eqref{eqn.bij}. It follows that if $b$ sends $(G_1,G_2,G_3)$ to $(H_1,H_2,H_3)$, then $b^\ast$ sends $(G_3^{-1},G_2^{-1},G_1^{-1})$ to $(H_3^{-1},H_2^{-1},H_1^{-1})$. Therefore, along the composition $b b^\ast$, we have
$$
(G_1,G_2,G_3) \overset{b}{\longmapsto}A(G_3^{-1},G_2^{-1},G_1^{-1})A^{-1}
\overset{b^\ast}{\longmapsto} A^2(G_1,G_2,G_3)A^{-2} = (G_1,G_2,G_3)
$$
for $[G_1,G_2,G_3] \in U$. Note that the last equality follows from Lemma~\ref{lem.AA}. This proves that each point of $U$ induces a representation of the closure of $bb^\ast$, which is the knot $10_{123}$. As $U$ has dimension at least 2, we conclude that $10_{123}$ is $\calX$-large.

\begin{remark}
    The argument in this section applies to any braids $b \in B_3$ satisfying the inequality~\eqref{eqn.claim}. For example, if we take $b = \sigma_1 \sigma_2^{-1}\sigma_2^{-1} \sigma_1 \sigma_1$, then $b^\ast = \sigma_2^{-1} \sigma _1 \sigma_1 \sigma_2^{-1} \sigma_2^{-1}$, and the closure of $bb^\ast$ is the knot $10_{99}$ (which is $\calX$-large, shown as in Section~\ref{sec.tangle}). Note, however, that the inequality~\eqref{eqn.claim} does not always hold. For instance, it fails for $b= \sigma_1 \sigma_2 \sigma_1 \sigma_2 \sigma_1$.
\end{remark}

\subsection{General construction}
\label{sec.sym}

The argument in the previous section for the knot $10_{123}$ gives rise to the following question.

Let $B_n$ denote the braid group with $n$ strands. We view braids as mapping classes of
the punctured disc relative to the boundary, via the isomorphism
$$
B_n\cong\mathrm{MCG}(D^2 \setminus \{p_1,\dotsc,p_n\}, \partial D^2).
$$
The mapping class group $\mathrm{MCG}(D^2 \setminus \{p_1,\dotsc,p_n\})$ 
(not relative to the boundary)  acts on $B_n$ by conjugation. We are 
interested in the action of orientation-reversing involutions, 
namely, in classes  $\tau\in \mathrm{MCG}(D^2 \setminus \{p_1,\dotsc,p_n\})$ 
that reverse the orientation and satisfy $\tau^2=\mathrm{Id}$.

\begin{question*}
Let $K$ be a knot obtained by the closure of a braid $b b^*$ with
$b\in B_n$ and $b^*=\tau b\tau$, where $\tau\in \mathrm{MCG}(D^2 \setminus \{p_1,\dotsc,p_n\})$
is an orientation-reversing involution.  Is $K$ $\mathcal X$-large?
\end{question*}

We first give a necessary and sufficient condition for $K$ to be a knot.
\begin{lemma}
\label{lem.bt}
    Let $b$ and $b^\ast = \tau b \tau$ be as above. Then
    the closure of $b b^\ast$ is a knot if and only if $n$ is odd and $\pi(b\tau)$ is an $n$-cycle, where $\pi : B_n \rightarrow S_n$ is the canonical projection from the braid group to the symmetric group.
\end{lemma}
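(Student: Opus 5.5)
The plan is to reduce the statement to a purely combinatorial fact about permutations. The closure of a braid $\beta\in B_n$ is a knot exactly when $\pi(\beta)\in S_n$ is an $n$-cycle, since the components of the closure correspond to the cycles of $\pi(\beta)$. So we must show that $\pi(bb^\ast)$ is an $n$-cycle if and only if $n$ is odd and $\pi(b\tau)$ is an $n$-cycle.

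First, I will extend $\pi$ to $\mathrm{MCG}(D^2\setminus\{p_1,\dots,p_n\})$. Every mapping class, including an orientation-reversing one, permutes the punctures, and this gives a homomorphism (or anti-homomorphism, depending on the composition convention) to $S_n$ that restricts to the canonical projection on $B_n$. Write $t=\pi(\tau)$; it is an involution or the identity, so $t=t^{-1}$. Then
\[
\pi(bb^\ast)=\pi(b)\,t\,\pi(b)\,t=\bigl(\pi(b)t\bigr)^2=\pi(b\tau)^2 .
\]
With the opposite composition convention we would instead get $(t\pi(b))^2$. This is conjugate to $(\pi(b)t)^2$ by $t$, and $t\pi(b)$ is conjugate to $\pi(b)t$, so the cycle types are the same and the conclusion is unaffected.

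It then remains to prove that, for $s\in S_n$, the square $s^2$ is an $n$-cycle if and only if $n$ is odd and $s$ is an $n$-cycle. The key fact is the behaviour of squaring on a single cycle. Squaring a cycle of odd length $k$ gives a $k$-cycle on the same support. Squaring a cycle of even length $k$ splits it into two $k/2$-cycles. In particular, every cycle of $s^2$ lies inside a cycle of $s$.

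Now suppose $s^2$ is an $n$-cycle. Its support must lie in a single cycle of $s$, so $s$ is an $n$-cycle. If $n$ were even, $s^2$ would split into two cycles, so $n$ is odd. The case $n=1$ is trivial. Conversely, if $s$ is an $n$-cycle with $n$ odd, then $s^2$ is an $n$-cycle by the odd-length case.

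Everything here is routine. The only point needing care is justifying that $\pi$ extends multiplicatively to the full mapping class group, including orientation-reversing classes, so that $\pi(\tau b\tau)=t\pi(b)t$. This holds because the action on the finite set of punctures is functorial under composition of homeomorphisms, regardless of orientation.
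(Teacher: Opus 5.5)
Your proposal is correct and follows the paper's route: identify $\pi(bb^\ast)=\pi(b\tau)^2$ and determine when a square in $S_n$ is an $n$-cycle. The differences are minor. The paper rules out even $n$ by a sign argument (a square is an even permutation, while an $n$-cycle with $n$ even is odd), whereas you use the fact that squaring an even-length cycle splits it. You also spell out the odd case and the extension of $\pi$ to orientation-reversing mapping classes, both of which the paper leaves implicit.
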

\begin{proof}
Since $b b^\ast = (b\tau)^2$, its image under $\pi$ is a square in $S_n$ and therefore has even sign.
Hence it cannot be an $n$-cycle when $n$ is even. 
When $n$ is odd, it is straightforward to see that $\pi ( (b\tau)^2)$ is an $n$-cycle if and only if $\pi(b \tau)$ is an $n$-cycle.
\end{proof}


Theorem~\ref{thm.sym} below answers the above question under mild assumptions. Its proof is based on a non-orientable version of a dimension estimate for representation varieties proved by Thurston in his notes \cite[Theorem~5.6]{ThurstonNotes}. More precisely, its proof relies on a dimension estimation for the representation variety of the mapping torus $M_{b\tau}$ of $b \tau$:
$$M_{b\tau}= (D^2 \times [0,1] \setminus b)/_\sim$$
where the quotient identifies $D^2 \times\{0\}$ with $D^2 \times\{1\}$ via $\tau$.
Note that $M_{b\tau}$ is a non-orientable 3-manifold whose boundary consists of two Klein bottles, due to Lemma~\ref{lem.bt}. To the best of the authors’ knowledge, this non-orientable version does not appear elsewhere in the literature. We therefore postpone the proof of Theorem~\ref{thm.sym} to Section~\ref{sec.nonori}, where we formulate and prove the dimension estimation in the non-orientable setting.

\begin{theorem} \label{thm.sym}
	Let  $K$ be a knot obtained by the closure of a braid $b b^*$ with
	$b\in B_n$ and $b^*=\tau b\tau$, 
	where $\tau\in \mathrm{MCG}(D^2 \setminus \{p_1,\dotsc,p_n\})$
	is an orientation-reversing involution.  
	    Suppose that the mapping torus $M_{b\tau}$ of $b \tau$ admits a representation $\rho$
	into $\mathrm{SL}_2(\BC)$ such that 
	\begin{itemize}
        \item[(a)] the restriction of $\rho$ to each boundary Klein bottle of $M_{b \tau}$ is irreducible, and
        \item[(b)] the restriction of $\rho$ to the orientable double cover of $M_{b \tau}$ 
		is irreducible.
	\end{itemize}
	Then $K$ is $\calX$-large.
\end{theorem}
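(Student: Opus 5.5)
The proof has three steps: a local dimension bound at $\rho$ for the mapping torus $M_{b\tau}$, a transfer of representations from $M_{b\tau}$ to $S^3\setminus K$ through the double cover, and a check that the dimension bound survives passage to characters.

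\textbf{Setup.} The closure of $bb^*=(b\tau)^2$ sits inside the mapping torus of $(b\tau)^2$, which is the orientable double cover $\widetilde M$ of $M_{b\tau}$. Explicitly, $S^3\setminus K$ is obtained from $\widetilde M$ by gluing in a solid torus along the braid axis. So a representation of $\pi_1(M_{b\tau})$ restricts to $\pi_1(\widetilde M)$. It descends to $\calG(K)$ exactly when it is trivial on the longitude-type curve of the axis torus. This curve is a boundary curve of $\partial D^2$, and its image under the monodromy is the product of the punctures' meridians, as in the $10_{123}$ example, where the relation $G_1G_2G_3=H_1H_2H_3$ enters.

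A cleaner route is to mimic Section~\ref{sec.10123}. The knot-group representation corresponds to a tuple $(G_1,\dots,G_n)$ with $b\cdot(G_i)=A\,\tau_*(G_i)\,A^{-1}$ and $A^2=\pm I$, where $\tau_*$ is the induced anti-automorphism. This data is equivalent to a representation of $\pi_1$ of $M_{b\tau}$ with the outer boundary capped. Lemma~\ref{lem.AA} shows why $A^2=-I$ then makes the monodromy close up.

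\textbf{Step 1 (non-orientable dimension estimate).} Prove the analogue of Thurston's Theorem~5.6: if $N$ is a compact non-orientable 3-manifold whose boundary components are Klein bottles, and $\rho$ is irreducible on each boundary Klein bottle, then near $\rho$ the character variety has dimension at least the number of Klein bottle components. The plan is the usual Euler characteristic count. Since $\chi(N)=\tfrac12\chi(\partial N)=0$, the tangent-space estimate $\dim Z^1-\dim B^1\ge -\chi(N)\dim\mathfrak{sl}_2+\dots$ must be sharpened using half of $H^1(\partial N;\mathfrak{sl}_2)$. This uses Poincaré duality with twisted (orientation) coefficients, so that the image of $H^1(N)\to H^1(\partial N)$ is Lagrangian for the twisted pairing. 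Irreducibility on each Klein bottle makes $H^0(\partial_i N;\mathfrak{sl}_2)=0$, and the Euler characteristic $0$ then forces $\dim H^1(\partial_iN;\mathfrak{sl}_2)=2$, contributing $1$ each. The honest lower bound on local dimension, not just on the tangent space, comes from Thurston's presentation-counting argument: take a presentation of $\pi_1 N$ from a spine, observe that the relations are dependent because of the boundary, and check that this dependency persists with Klein bottle boundary. \emph{This step is the main obstacle}, since the orientation character twists the duality and the relation count must be redone.

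\textbf{Step 2 (transfer and injectivity).} Apply Step 1 to $M_{b\tau}$ with its two Klein bottles to get a family of characters of dimension at least $2$ through $[\rho]$, with hypothesis (a) supplying the needed irreducibility. Restrict the family to $\widetilde M$, then adjust or cap along the axis boundary component to land in $\calX(K)$. Concretely, the axis Klein bottle contributes one parameter, which is used to impose the capping condition, or else that condition holds automatically by the $A^2=-I$ mechanism. The knot's Klein bottle contributes its meridian trace, so I still expect dimension $\ge 2$ after this adjustment; verifying the count here requires care.

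\textbf{Step 3 (dimension survives restriction).} Use hypothesis (b): irreducible representations of the index-two subgroup $\pi_1\widetilde M$ are determined up to conjugacy by their characters. Restriction from $M_{b\tau}$ is therefore finite-to-one near $\rho$; at most the sign-twist by the orientation character gives two extensions. Hence the image family in $\calX(K)$ also has dimension at least $2$, and $K$ is $\calX$-large.
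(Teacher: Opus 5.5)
Your outline has the same architecture as the paper: a dimension estimate on $M_{b\tau}$ with $k=2$, then a transfer to $\calG(K)$ via $A^2=-I$. But Step~1, which carries essentially all the content (the paper's Theorem~\ref{Theorem:DimensionEstimate}), is not proved, and the heuristic you give for it is wrong.

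\begin{itemize}
\item At an irreducible representation of the Klein bottle, $H^2(K^2;\mathfrak{sl}_2)\neq 0$. It is dual to the invariants of $\mathfrak{sl}_2\otimes w$, where $w$ is the orientation character, and $\mathrm{diag}(1,-1)$ is such an invariant in case (2) of Lemma~\ref{Lemma:repsK2}.
\item Hence $\dim H^1(K^2;\mathfrak{sl}_2)=1$, not $2$. This is consistent with Lemma~\ref{Lemma:K^2Smooth} and with the irreducible component of $\calX(K^2)$ being a curve.
\item For non-orientable $N$, duality pairs $\mathfrak{sl}_2$-coefficients with $\mathfrak{sl}_2\otimes w$-coefficients. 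So there is no Lagrangian half-dimension count inside $H^1(\partial N;\mathfrak{sl}_2)$.
\item In any case, lower bounds on tangent spaces do not bound the dimension of a component from below.
\item A one-vertex spine of $N$ with $\chi(N)=0$ only gives $\dim\calR(N)\geq 3$. The ``dependency among relations'' you invoke is exactly what has to be produced.
\end{itemize}

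The missing idea is the paper's drilling argument:
\begin{itemize}
\item For each $K^2_i$, drill out an orientation-preserving loop $\gamma_i$ based on $K^2_i$. Choose it using hypothesis (b), so that $\rho$ is irreducible on $\pi_1(T^2_i\cup\gamma_i)$.
\item The drilled manifold $M'$ has $\chi(M')=-k$, so $\dim\calR(M')\geq 3k+3$.
\item $\calR(M)$ is the preimage under restriction of $\prod_i\calR(K^2_i\cup\gamma_i)$ inside $\prod_i\calR(N_{4,i})$. The first is smooth of dimension $7$, by (a) and Lemma~\ref{Lemma:K^2Smooth}. The second is smooth of dimension $9$, by the choice of $\gamma_i$. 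The inclusion is an immersion.
\item Lemma~\ref{Lemma:codims} then costs at most $2k$, giving $\dim R_0\geq k+3$.
\end{itemize}
Note that in this argument (b) is already needed for the dimension estimate, not only in your Step~3.

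Step~2 is left undecided, and one of your two options fails. Spending a parameter to impose the closing condition would leave only dimension $\geq 1$. Nothing needs to be spent. Since $\rho$ is irreducible on the outer Klein bottle, it is in case (2) of Lemma~\ref{Lemma:repsK2}. So $\tr\rho(a)=0$ on all of $X_0$, being a closed condition that holds on a nonempty open subset. Hence $\rho(a)^2=-I$ is central. The fiber representation $\rho_0$ then satisfies $(bb^*)\cdot\rho_0=\rho(a)^2\rho_0\rho(a)^{-2}=\rho_0$ on the nose, so it defines a representation of $\calG(K)$.

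Relatedly, your setup misidentifies the curve killed by the axis solid torus. It is the axis meridian $\{p\}\times S^1$, with $p\in\partial D^2$ fixed by $\tau$; this is the lift of $a^2$. It is not $\partial D^2$, which is the product of the puncture meridians and survives. Also, since $\rho(a^2)=-I$, the restriction $\rho|_{\widetilde M}$ itself does not descend to $\calG(K)$. One uses $\rho_0$ with the stable letter sent to $I$.

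Your Step~3 (finite-to-one via (b)) is fine, and it makes explicit a point the paper leaves implicit. Because $\rho(a^2)=-I$, hypothesis (b) is equivalent to irreducibility of $\rho_0$, which is what Step~3 actually uses.
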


A typical example of an orientation-reversing involution $\tau\in \mathrm{MCG}(D^2 \setminus \{p_1,\dotsc,p_n\})$ is the reflection that interchanges $p_i$ and $p_{n+1-i}$ for $1 \leq i \leq n$. In this case, $b^\ast = \tau b \tau$ is related to $b$ under the correspondence
$$\sigma_{i}^{\pm 1} \longmapsto \sigma_{n-i}^{\mp 1},\quad i=1,\ldots,n-1.$$
Here $\sigma_1,\ldots,\sigma_{n-1}$ are the standard generators of the braid group $B_n$.
Interesting examples of knots  admitting braid presentations of the form $bb^\ast$ are  
Turk's head knots $Th(p,q)$ with $p$ and $q$ odd. A standard braid presentation of $Th(p,q)$ is 
\begin{equation*}
    \left(\sigma_1 \, \sigma_2^{-1} \cdots \sigma_{p-2} \,\sigma_{p-1}^{-1} \right)^{q} \in B_p.
\end{equation*}
Define
\begin{equation*}
    O = \sigma_1 \sigma_3 \cdots \sigma_{p-2}, \quad 
    E = \sigma_2^{-1} \sigma_4^{-1} \cdots \sigma_{p-1}^{-1}, \quad
    b =(OE)^{\frac{q-1}{2}}O.
\end{equation*}
Since $\sigma_i$ and $\sigma_j$ commute whenever $|i-j|>1$, one has $O^\ast = E$, $E^\ast = O$, and $b^\ast = (EO)^{\frac{q-1}{2}}E$. It then follows that the closure of $bb^\ast$ is $Th(p,q)$;
Figure~\ref{fig.turk} illustrates this decomposition.
Motivated by this observation and supported by computational experiments, we propose the following.
\begin{conjecture}
\label{conj.turk}
    Turk's head knots $Th(p,q)$ with $p$ and $q$ odd are $\calX$-large.
\end{conjecture}
\begin{figure}[!htbp]
    \centering
     \scalebox{1}{
\begingroup%
  \makeatletter%
  \providecommand\color[2][]{%
    \errmessage{(Inkscape) Color is used for the text in Inkscape, but the package 'color.sty' is not loaded}%
    \renewcommand\color[2][]{}%
  }%
  \providecommand\transparent[1]{%
    \errmessage{(Inkscape) Transparency is used (non-zero) for the text in Inkscape, but the package 'transparent.sty' is not loaded}%
    \renewcommand\transparent[1]{}%
  }%
  \providecommand\rotatebox[2]{#2}%
  \newcommand*\fsize{\dimexpr\f@size pt\relax}%
  \newcommand*\lineheight[1]{\fontsize{\fsize}{#1\fsize}\selectfont}%
  \ifx\svgwidth\undefined%
    \setlength{\unitlength}{199.68044972bp}%
    \ifx\svgscale\undefined%
      \relax%
    \else%
      \setlength{\unitlength}{\unitlength * \real{\svgscale}}%
    \fi%
  \else%
    \setlength{\unitlength}{\svgwidth}%
  \fi%
  \global\let\svgwidth\undefined%
  \global\let\svgscale\undefined%
  \makeatother%
  \begin{picture}(1,0.48573055)%
    \lineheight{1}%
    \setlength\tabcolsep{0pt}%
    \put(0,0){\includegraphics[width=\unitlength,page=1]{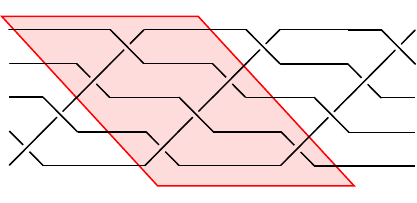}}%
    \put(0.56578169,-0.02945738){\makebox(0,0)[lt]{\lineheight{1.25}\smash{\begin{tabular}[t]{l}$b$\end{tabular}}}}%
    \put(0,0){\includegraphics[width=\unitlength,page=2]{turk.pdf}}%
    \put(0.0680611,0.46989917){\makebox(0,0)[lt]{\lineheight{1.25}\smash{\begin{tabular}[t]{l}$O$\end{tabular}}}}%
    \put(0.2279113,0.46916616){\makebox(0,0)[lt]{\lineheight{1.25}\smash{\begin{tabular}[t]{l}$E$\end{tabular}}}}%
    \put(0.37620827,0.46989917){\makebox(0,0)[lt]{\lineheight{1.25}\smash{\begin{tabular}[t]{l}$O$\end{tabular}}}}%
  \end{picture}%
\endgroup%
}
    \caption{A standard braid presentation of $Th(5,3)$ and the choice of $b$.}
    \label{fig.turk}
\end{figure}

Another example of an orientation-reversing involution $\tau\in \mathrm{MCG}(D^2 \setminus \{p_1,\dotsc,p_n\})$ is the reflection that fixes each $p_i$ and reverses the orientation of a small loop around $p_i$. In this case, $b^\ast = \tau b \tau$ is related to $b$ under the correspondence
$$\sigma_{i}^{\pm 1} \longmapsto \sigma_{i}^{\mp 1},\quad i=1,\ldots,n-1.$$
Namely, $b^\ast$ is the mirror image of $b$. This naturally leads us to strongly positive amphichiral knots, which are knots that admit a diagram that is mapped to
its mirror image by a rotation of $\pi$ preserving the orientation \cite{lamm2023strongly}. By definition, a strongly positive amphichiral knot is obtained by gluing an oriented tangle and its mirror. 

To be precise, let $\epsilon = (\epsilon_1, \ldots,\epsilon_n) \in \{\pm\}^n$ be an $n$-tuple of signs and $\mathrm{Mor} (\epsilon,-\epsilon)$ denote the set of oriented $(n,n)$-tangles in $D^2 \times [0,1]$ such that the orientation at $p_i \times \{0\}$ (resp., $p_i \times \{1\}$) matches with the sign $\epsilon_i$ (resp., $-\epsilon_i$) for $1 \leq i \leq n$. Note that braids of $n$ strands may be regarded as elements of $\mathrm{Mor}(+^n,-^n)$.
For an oriented tangle $t \in \mathrm{Mor}(\epsilon,-\epsilon)$, we denote by $t^\ast$ its mirror image, obtained by reversing the over/under information at every crossing. Clearly, $b^\ast$ also lies in $\mathrm{Mor}(\epsilon,-\epsilon)$.  By gluing the boundaries of $t$ and $t^\ast$ naturally in an orientation-preserving way, we obtain an oriented knot or link $K = t \cup t^\ast$, which is, by definition,  strongly positive amphichiral. See Figure~\ref{fig.12a1019} for examples. 
We also denote by $M_{t \tau}$ the mapping torus of $t \tau$:
$$M_{t\tau} := (D^2 \times [0,1] \setminus t) /_\sim$$
where the quotient identifies $D^2 \times \{0\}$ with $D^2 \times \{1 \}$ via the reflection $\tau$.
It is a non-orientable 3-manifold, and when $K$ is a knot, its boundary  consists of two Klein bottles. Then, by replacing $M_{b \tau}$ with $M_{t \tau}$ in Theorem~\ref{thm.sym} and its proof, we obtain the following.
\begin{theorem}\label{thm.spak}
Let $t \in \mathrm{Mor}(\epsilon,-\epsilon)$ be an oriented tangle where $\epsilon$ is a tuple of signs.
Suppose that a knot $K$ is obtained by gluing $t$ to its mirror image $t^\ast \in \mathrm{Mor}(\epsilon,-\epsilon)$
along their boundaries, and that $M_{t\tau}$ admits a representation
$\rho$ into $\mathrm{SL}_2(\BC)$ such that
\begin{itemize}
    \item[(a)] the restriction of $\rho$ to each boundary Klein bottle of $M_{t\tau}$ is irreducible, and
    \item[(b)] the restriction of $\rho$ to the orientable double cover of $M_{t\tau}$ is irreducible.
\end{itemize}
Then the knot $K$ is (by construction strongly positive amphichiral and) $\calX$-large.
\end{theorem}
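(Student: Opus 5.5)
The plan is to follow the proof of Theorem~\ref{thm.sym} verbatim, with the braid $b$ replaced by the tangle $t$ and the mapping torus $M_{b\tau}$ replaced by $M_{t\tau}$, where $\tau$ is the reflection fixing each $p_i$. The first step is to identify $S^3\setminus K$ with a double cover of $M_{t\tau}$. Cutting $M_{t\tau}$ along the fibre $D^2\times\{0\}$ returns $D^2\times[0,1]\setminus t$. Gluing two copies of this, the second one reflected by $\tau$, gives $t\cup \tau t\tau = t\cup t^\ast$ closed up. So the exterior $E_K$ of $K$ (the closure of $t t^\ast$ in the solid torus, completed to $S^3$ in the standard way) is the orientable double cover of $M_{t\tau}$ associated with the orientation character. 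The positive amphichirality is exactly what makes the orientations match. Since $K$ is a knot, the boundary of $M_{t\tau}$ consists of Klein bottles; each is double covered by a torus in $\partial E_K$ (together with the solid-torus boundary component, as in the braid case). Restricting a representation of $\pi_1(M_{t\tau})$ to the index-two subgroup therefore gives a representation of $\calG(K)$, or of the link group whose Dehn filling yields $K$, handled exactly as for $M_{b\tau}$.

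The second step is to apply the non-orientable dimension estimate, Theorem~\ref{Theorem:DimensionEstimate}, to $M_{t\tau}$ at $\rho$. Hypothesis (a), irreducibility on each boundary Klein bottle, and hypothesis (b), irreducibility on the orientable double cover, are precisely the nondegeneracy assumptions it needs. It should give that the component of the character variety of $M_{t\tau}$ through $[\rho]$ has dimension at least the number of Klein bottle boundary components, namely two. Hypothesis (b) is then used a second time. Restriction to the orientable double cover sends irreducible characters to irreducible characters, and it is finite-to-one near $[\rho]$, since an irreducible representation of the index-two subgroup has at most two extensions up to conjugation. Hence the image in the character variety of $E_K$ still has dimension at least two.

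The last step, passing from $E_K$ (or the link exterior containing the braid axis) to $\calX(K)$, copies the corresponding step of Theorem~\ref{thm.sym}. The boundary condition coming from the closure (the axis component or the identification at the ends) is built into the mapping torus structure, so these representations genuinely factor through $\calG(K)$. This gives a subvariety of $\calX(K)$ of dimension at least two, so $K$ is $\calX$-large.

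The main obstacle is the first step: checking carefully that for a tangle in $\mathrm{Mor}(\epsilon,-\epsilon)$, rather than a braid, the orientable double cover of $M_{t\tau}$ really is the knot complement, and that the boundary Klein bottles behave as in the braid case. This is where the orientation conventions $\epsilon$ versus $-\epsilon$ and the orientation-preserving gluing enter. I would first verify it on the example of Figure~\ref{fig.12a1019}.
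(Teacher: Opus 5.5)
Your outline matches the paper, which simply reruns the proof of Theorem~\ref{thm.sym} with $M_{t\tau}$ in place of $M_{b\tau}$. Theorem~\ref{Theorem:DimensionEstimate} with $k=2$ gives a component $X_0\subset\calX(M_{t\tau})$ of dimension at least $2$, and one then pushes $X_0$ into $\calX(K)$. Your finite-to-one remark is correct and is a detail the paper leaves implicit: an irreducible representation of an index-two subgroup has at most two extensions. The gap is the descent to $\calG(K)$, which you say is ``built into the mapping torus structure.'' It is not.

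The orientable double cover of $M_{t\tau}$ is $(D^2\times S^1)\setminus N(K)$. This is the exterior of the two-component link $K\cup(\text{axis})$, not $E_K$; if it were $E_K$, $M_{t\tau}$ would have one boundary component and you could not take $k=2$. To reach $\calG(K)$ you must kill the axis meridian $\{pt\}\times S^1\subset\partial D^2\times S^1$. This curve covers $a^2$, where $a$ is the $[0,1]$-direction generator of the outer Klein bottle $\partial D^2\times[0,1]/\tau$. Nothing in the construction makes $\rho(a)^2$ trivial or even central. For a representation whose outer Klein bottle restriction is in case (1) of Lemma~\ref{Lemma:repsK2}, $\rho(a)$ is arbitrary, and the restriction to the double cover is merely a representation of the link group.

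The missing idea is a second use of hypothesis (a), through Lemma~\ref{Lemma:repsK2}. An irreducible representation of $\langle a,c\mid aca^{-1}=c^{-1}\rangle$ has $\tr\rho(a)=0$, hence $\rho(a)^2=-I$. This holds on a nonempty Zariski open subset of $X_0$, so on all of $X_0$.

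Even so, your claim that the representations ``genuinely factor through $\calG(K)$'' is literally false, because the axis meridian goes to $-I$. What works, as in the paper, is that $-I$ is central. The propagation $\rho_0\mapsto\rho(a)\rho_0\rho(a)^{-1}\mapsto\rho(a)^2\rho_0\rho(a)^{-2}=\rho_0$ through $t$ and then $t^\ast$ closes up. Equivalently, twist the restriction by the character $\pi_1\to\{\pm1\}$ that is $-1$ on the axis meridian and $+1$ on the meridian of $K$. This twist is a bijection on characters, so with your finite-to-one argument the image of $X_0$ in $\calX(K)$ has dimension at least $2$. The step you flagged as the main obstacle (identifying the double cover) is routine; the real content is the step you waved through.
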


\begin{figure}[!htbp]
    \centering
     \scalebox{1}{
\begingroup%
  \makeatletter%
  \providecommand\color[2][]{%
    \errmessage{(Inkscape) Color is used for the text in Inkscape, but the package 'color.sty' is not loaded}%
    \renewcommand\color[2][]{}%
  }%
  \providecommand\transparent[1]{%
    \errmessage{(Inkscape) Transparency is used (non-zero) for the text in Inkscape, but the package 'transparent.sty' is not loaded}%
    \renewcommand\transparent[1]{}%
  }%
  \providecommand\rotatebox[2]{#2}%
  \newcommand*\fsize{\dimexpr\f@size pt\relax}%
  \newcommand*\lineheight[1]{\fontsize{\fsize}{#1\fsize}\selectfont}%
  \ifx\svgwidth\undefined%
    \setlength{\unitlength}{368.49519985bp}%
    \ifx\svgscale\undefined%
      \relax%
    \else%
      \setlength{\unitlength}{\unitlength * \real{\svgscale}}%
    \fi%
  \else%
    \setlength{\unitlength}{\svgwidth}%
  \fi%
  \global\let\svgwidth\undefined%
  \global\let\svgscale\undefined%
  \makeatother%
  \begin{picture}(1,0.37761416)%
    \lineheight{1}%
    \setlength\tabcolsep{0pt}%
    \put(0,0){\includegraphics[width=\unitlength,page=1]{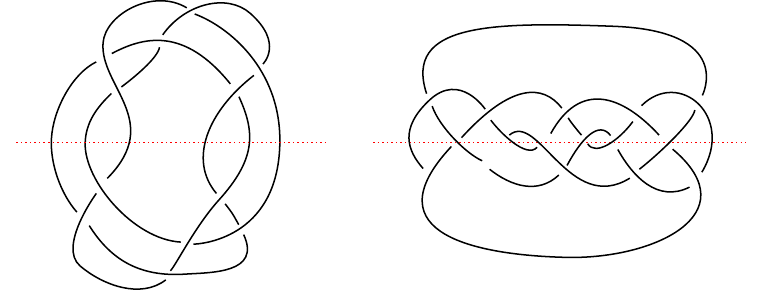}}%
    \put(0.01060309,0.23329878){\makebox(0,0)[lt]{\lineheight{1.25}\smash{\begin{tabular}[t]{l}$t$\end{tabular}}}}%
    \put(0.00673525,0.13031026){\makebox(0,0)[lt]{\lineheight{1.25}\smash{\begin{tabular}[t]{l}$t^\ast$\end{tabular}}}}%
    \put(0,0){\includegraphics[width=\unitlength,page=2]{12a1019.pdf}}%
    \put(0.49390307,0.23482265){\makebox(0,0)[lt]{\lineheight{1.25}\smash{\begin{tabular}[t]{l}$t$\end{tabular}}}}%
    \put(0.49301506,0.13880796){\makebox(0,0)[lt]{\lineheight{1.25}\smash{\begin{tabular}[t]{l}$t^\ast$\end{tabular}}}}%
  \end{picture}%
\endgroup%
}
    \caption{Symmetric diagrams of $10_{99}$ and $12a_{1019}$. The diagram of $12a_{1019}$ is taken from \cite{lamm2022symmetric}.}
    \label{fig.12a1019}
\end{figure}

\section{Dimension estimate \emph{\`a la Thurston}}
\label{sec.nonori}

In this section, we present  a non-orientable analogue of Thurston's theorem \cite[Theorem 5.6]{ThurstonNotes} giving a lower bound on the dimension of character varieties. 

In the following, for a manifold $M$, the variety of $\mathrm{SL}_2(\BC)$-representations is denoted by 
$\calR(M)=\mathrm{Hom}(\pi_1(M),\,\mathrm{SL}_2(\mathbb{C}))$, and the character of $\rho \in \calR(M)$ is denoted by $\chi_\rho$.


\subsection{Klein bottle}

Let $K^2$ be the Klein bottle. Its fundamental group has the presentation
\begin{equation}
    \label{eqn.klein}
    \pi_1(K^2)\cong\langle a, b\mid ab a^{-1}= b^{-1}\rangle.
\end{equation}
\begin{lemma}
    \label{Lemma:repsK2}
    Let $\rho \in \calR(K^2)$. Then there are three possibilities up to conjugation:
    \begin{enumerate}
        \item $\rho(b)=\pm \left(\begin{smallmatrix}
            1 & 0 \\ 0 & 1
        \end{smallmatrix}\right)$ and $\rho(a)$ is arbitrary.
        \item $\rho(b)=
            \pm \left(\begin{smallmatrix}
                \lambda & 0 \\ 0 & \lambda^{-1}
            \end{smallmatrix}\right)$ with $\lambda\neq \pm 1$ and 
        $\rho(a)=
        \pm \left(\begin{smallmatrix}
                0 & 1 \\ -1 & 0
            \end{smallmatrix}\right)$.
        \item $\rho(b)=
            \pm \left(\begin{smallmatrix}
                1 & 1 \\ 0 & 1
            \end{smallmatrix}\right)$
            and 
            $\rho(a)=
            \pm \left(\begin{smallmatrix}
            i & 0 \\ 0 & -i
        \end{smallmatrix}\right)$.
    \end{enumerate}
\end{lemma}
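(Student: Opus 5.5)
Set $A=\rho(a)$ and $B=\rho(b)$. The relation in \eqref{eqn.klein} then reads $ABA^{-1}=B^{-1}$. The plan is to split into cases according to the Jordan form of $B$. Since $B$ and $B^{-1}$ are conjugate through $A$, the argument rests on one observation: conjugation by $A$ carries eigenspaces of $B$ for an eigenvalue $\mu$ to eigenspaces of $B^{-1}$ for $\mu$, which are the eigenspaces of $B$ for $\mu^{-1}$. After that, each case is a short matrix computation.

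\emph{Case $B=\pm I$.} The relation holds trivially, so $A$ is arbitrary and we are in case (1).

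\emph{Case $B$ diagonalizable with $\tr B\neq\pm2$.} Conjugate so that $B=\mathrm{diag}(\mu,\mu^{-1})$ with $\mu\neq\pm1$, and write $\mu=\pm\lambda$ to match the stated form. Then $A$ maps the $\mu$-eigenline $e_1$ of $B$ to the $\mu$-eigenline of $B^{-1}$, which is $e_2$. Likewise it maps $e_2$ to $e_1$. So $A$ is antidiagonal, $A=\left(\begin{smallmatrix}0&s\\-s^{-1}&0\end{smallmatrix}\right)$. Conjugating further by a diagonal matrix $\mathrm{diag}(r,r^{-1})$ fixes $B$ and multiplies $s$ by $r^2$. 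Choosing $r$ with $r^2=s^{-1}$ (or $\pm s^{-1}$ as needed) normalizes $A$ to $\pm\left(\begin{smallmatrix}0&1\\-1&0\end{smallmatrix}\right)$. This is case (2).

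\emph{Case $B$ parabolic, $B\neq\pm I$ with $\tr B=\pm2$.} Conjugate so that $B=\pm\left(\begin{smallmatrix}1&1\\0&1\end{smallmatrix}\right)$, so that $B^{-1}=\pm\left(\begin{smallmatrix}1&-1\\0&1\end{smallmatrix}\right)$. The matrix $A$ must preserve the unique fixed line $e_1$, so $A$ is upper triangular, $A=\left(\begin{smallmatrix}x&y\\0&x^{-1}\end{smallmatrix}\right)$. Conjugation by $A$ multiplies the off-diagonal entry of $B$ by $x^2$, so we need $x^2=-1$, i.e.\ $x=\pm i$. Conjugating by a unipotent upper triangular matrix $\left(\begin{smallmatrix}1&w\\0&1\end{smallmatrix}\right)$ commutes with $B$. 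It changes $y$ to $y+w(x^{-1}-x)=y\mp 2iw$, so we can choose $w$ to kill $y$. This gives $A=\pm\,\mathrm{diag}(i,-i)$, which is case (3).

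The main point needing care is the bookkeeping of signs $\pm$ together with $-I$, making sure each normalizing conjugation preserves the already-fixed form of $B$. The computations themselves are routine.
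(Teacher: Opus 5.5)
Your proof is correct and takes the same route as the paper, whose proof only says to consider the possible conjugacy classes of $\rho(b)$ and apply the relation $aba^{-1}=b^{-1}$. Your eigenline observation, together with the normalizing conjugations (by diagonal matrices when $\tr B\neq\pm 2$, by unipotent upper-triangular matrices in the parabolic case), fills in exactly the computations the paper leaves to the reader.
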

The proof of the lemma is straightforward by considering the possible conjugacy classes 
for $\rho(b)$ and using the relation $aba^{-1}=b^{-1}$.

\begin{corollary}
    The character variety $\mathcal X( K^2)$ has three components. Exactly
    one of them contains irreducible representations, and this component has dimension 1. 
\end{corollary}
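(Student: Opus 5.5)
I would read the corollary off Lemma~\ref{Lemma:repsK2}. Since the characters of $\mathrm{SL}_2(\BC)$-representations of $\pi_1(K^2)$ are determined by traces of words in $a$ and $b$, the plan is to parametrize each of the three families by trace coordinates and identify the corresponding subvarieties of $\calX(K^2)$. I will use the coordinates $(\tr\rho(a),\tr\rho(b),\tr\rho(ab))$, which determine the character of any representation of the free group on $a,b$, and hence of $\pi_1(K^2)$.

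\textbf{Step 1 (case (1)).} Here $\rho(b)=\pm I$ and $\rho(a)$ is arbitrary, so $\rho$ is reducible (indeed abelian). Its character is determined by $\tr\rho(a)$ and the sign $\tr\rho(b)=\pm 2$. This gives two curves in $\calX(K^2)$, namely $\{\tr b=2\}$ and $\{\tr b=-2\}$, each isomorphic to $\BC$ via $\tr a$.

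\textbf{Step 2 (case (3)).} Here $\rho(b)$ is parabolic and $\rho(a)$ is diagonal, so $\rho$ is reducible (upper triangular). Its character coincides with that of the diagonal representation $b\mapsto\pm I$, $a\mapsto\pm\mathrm{diag}(i,-i)$. It therefore lies on the curves of Step 1 and contributes no new component.

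\textbf{Step 3 (case (2)).} Here $\rho(b)$ is diagonal with $\lambda\neq\pm1$ and $\rho(a)$ is the rotation swapping the eigenlines. This representation is irreducible, since the only invariant lines of $\rho(b)$ are the two eigenlines, and $\rho(a)$ swaps them. Computing traces gives $\tr\rho(a)=0$, $\tr\rho(b)=\pm(\lambda+\lambda^{-1})$ and $\tr\rho(ab)=0$. Taking the closure as $\lambda$ varies, including $\lambda=\pm1$, gives the curve $C=\{\tr a=0,\ \tr ab=0\}$. This curve has dimension $1$ and is parametrized by $\tr b\in\BC$.

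\textbf{Step 4 (assembling the components).} The curve $C$ is not contained in the lines $\{\tr b=\pm2\}$; it meets each of them only at the single point where $\tr a=0$. Hence $\calX(K^2)$ is the union of exactly three irreducible curves: the two lines of Step 1 and $C$. All characters of irreducible representations lie on $C$, and $C$ has dimension $1$.

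The only mildly delicate point is checking that the families really give distinct components rather than one containing another. I expect this to follow immediately from the trace coordinates above, since $C$ and the two lines are distinct irreducible curves.
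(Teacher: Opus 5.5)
Your proposal is correct and follows essentially the same route as the paper. Both read the components off Lemma~\ref{Lemma:repsK2}: case (1) gives the two reducible components $\tr b=\pm 2$, case (3) contributes only a point already on those components, and case (2) gives the irreducible one-dimensional component $\tr a=\tr ab=0$. Your version only adds explicit bookkeeping in the coordinates $(\tr a,\tr b,\tr ab)$, namely the closure at $\tr b=\pm2$, the intersection points, and non-containment.
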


\begin{proof}
    Representation in Case (1) of Lemma~\ref{Lemma:repsK2} are reducible and yields two component of
    $\mathcal X( K^2)$, characterized by $\chi(b)=\pm 2$ 
    (and $\chi(ab)=\pm \chi(a)$). 
    Representations in Case (3) form a single point, 
    lying in one of the previous components.
    Finally, 
    representations in Case (2) are irreducible and yield 
    the third component, which is 1-dimensional and 
    characterized by $\chi(a)=0 $ (and $\chi(ab)=0$).
\end{proof}


\begin{lemma}
    \label{Lemma:K^2Smooth}
    Let $\rho\in \calR(K^2)$. If $\rho$ is irreducible, then it is a smooth point of $\calR(K^2)$, and its Zariski tangent space has dimension $4$.
\end{lemma}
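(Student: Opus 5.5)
The plan is to realize $\calR(K^2)$ as the fibre of a single relation map and compute the rank of its differential. Write
$$\calR(K^2)=\{(A,B)\in \SL_2(\BC)^2 : ABA^{-1}B=I\}=\Phi^{-1}(I), \qquad \Phi(A,B)=ABA^{-1}B .$$
The ambient space $\SL_2(\BC)^2$ is smooth of dimension $6$ and the target is $3$-dimensional. At an irreducible $\rho$ it therefore suffices to show that $d\Phi_{(A,B)}$ is surjective onto $T_I\SL_2(\BC)\cong\mathfrak{sl}_2$. The implicit function theorem then gives a smooth point whose Zariski tangent space is $\ker d\Phi$, of dimension $6-3=3$. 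That cannot be the intended count, since the statement says $4$. So I would instead compute the Zariski tangent space directly as the space of cocycles $Z^1(\pi_1(K^2);\mathfrak{sl}_2\mathrm{Ad}\rho)$ and compare it with the dimension of the variety near $\rho$.

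The cocycle computation goes as follows. A cocycle is determined by $(u,v)=(z(a),z(b))\in\mathfrak{sl}_2^2$, subject to the linearization of the relation $aba^{-1}b=1$. Expanding with Fox calculus gives
$$(1-\mathrm{Ad}_{aba^{-1}})\,u+(\mathrm{Ad}_a+\mathrm{Ad}_{aba^{-1}})\,v=0 .$$
Since $aba^{-1}=b^{-1}$, this becomes $(1-\mathrm{Ad}_{b^{-1}})u+(\mathrm{Ad}_a+\mathrm{Ad}_{b^{-1}})v=0$, which is $3$ linear equations in $6$ unknowns. So $\dim Z^1=6-\mathrm{rank}$, and we must show the rank is $2$. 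This is exactly the rank of $d\Phi$, which shows my first naive guess of surjectivity was wrong. I would verify the rank using the normal form of Lemma~\ref{Lemma:repsK2}, Case (2), which covers all irreducible $\rho$. Take $B=\mathrm{diag}(\lambda,\lambda^{-1})$ with $\lambda\neq\pm1$ and $A=\left(\begin{smallmatrix}0&1\\-1&0\end{smallmatrix}\right)$ (the signs do not affect $\mathrm{Ad}$). In the basis $e,h,f$ of $\mathfrak{sl}_2$, $\mathrm{Ad}_{B^{-1}}$ is $\mathrm{diag}(\lambda^{-2},1,\lambda^{2})$ and $\mathrm{Ad}_A$ sends $h\mapsto -h$, $e\mapsto -f$, $f\mapsto -e$. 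Then $1-\mathrm{Ad}_{B^{-1}}$ has image $\langle e,f\rangle$, and $\mathrm{Ad}_A+\mathrm{Ad}_{B^{-1}}$ kills $h$ and maps $\langle e,f\rangle$ into $\langle e,f\rangle$. The total image is therefore $\langle e,f\rangle$, of rank exactly $2$, so $\dim Z^1=4$. The reason is structural: the $h$-component of the relation is automatically satisfied. Equivalently, $\det\Phi$-type considerations force $\Phi$ to land in a proper subvariety near $I$, namely the elements conjugate into $\{B^{-1}\}$-twisted form.

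For smoothness, I would show that the irreducible locus has dimension $4$ at $\rho$. The Case (2) representations have $\lambda$ free, which is one parameter. The stabilizer of an irreducible $\rho$ is $\{\pm I\}$, so the conjugation orbit has dimension $3$. The orbit map $\SL_2(\BC)\times\{\lambda\}\to\calR(K^2)$ thus has image of dimension $1+3=4$, using that distinct $\lambda^{\pm1}$ give distinct characters because $\tr B$ varies. Hence every irreducible component through $\rho$ has dimension $\ge 4=\dim T_\rho$, so $\rho$ is a smooth point with $4$-dimensional tangent space.

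The main obstacle is bookkeeping rather than any conceptual difficulty. One must derive the Fox-derivative linearization with the correct signs and conjugations, and check that the rank is exactly $2$ (not $3$) for every $\lambda\neq\pm1$. In particular, the rank count on $\langle e,f\rangle$ must not degenerate when $\lambda^2=-1$. In that case $1-\mathrm{Ad}_{B^{-1}}$ is still invertible on $\langle e,f\rangle$ since $\lambda^{\pm2}\neq 1$, so it is fine.
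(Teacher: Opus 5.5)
Your argument is correct and is essentially the paper's. Both show that the component through $\rho$ has dimension $4$: you do this via the explicit $\lambda$-family and $3$-dimensional conjugation orbits, the paper via the $1$-dimensional irreducible component of $\calX(K^2)$. Both then use $\dim Z^1=4$ to conclude smoothness, except that you verify $\dim Z^1=4$ by a direct Fox-calculus computation at the Case (2) normal form, where the paper cites \cite[Proposition~4.1]{PortiDim}.

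In a write-up, drop the opening surjectivity guess and the aside that $\Phi$ lands in a proper subvariety near $I$; that aside is false. An element $w$ Killing-orthogonal to the image of $d\Phi$ at $(A,B)$ must centralize $B$ and satisfy $\mathrm{Ad}_A w=-w$. For $B$ regular semisimple this forces $ABA^{-1}=B^{-1}$, so $d\Phi$ is surjective at nearby points off the fibre. The rank drop is therefore only infinitesimal, reflecting $H^2(K^2;\mathfrak{sl}_2)\cong\BC$ spanned by $h$.
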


\begin{proof}
    The component $R_0$ of $\calR(K^2)$ containing $\rho$  has dimension 4, because the corresponding component of $\calX(K^2)$ has dimension 1 and $\rho$ is irreducible. 
    The Zariski tangent space at $\rho$ also has dimension 4; see \cite[Proposition 4.1]{PortiDim}. As the Zariski tangent space has 
    the same dimension as $R_0$,  $\rho$ is a smooth point.
\end{proof}

\subsection{Dimension estimate}

Recall that any (connected) non-orientable manifold $M$ admits the orientable double cover
$\widetilde{M}\to M$, and it fundamental group $\pi_1(\widetilde{M})$
consists precisely of the orientation-preserving elements of $\pi_1(M)$. 

\begin{theorem}
    \label{Theorem:DimensionEstimate}
    Let $M$ be a compact non-orientable 3-manifold whose boundary consists of $k$ Klein bottles
    $K^2_1, \ldots, K^2_k$. Let  $\rho\in \calR(M)$ be
    a representation such that
    \begin{enumerate}
        \item[(a)] the restriction of $\rho$ to each $K^2_i$ is irreducible, and
        \item[(b)] the restriction of $\rho$ to the orientable double cover $\widetilde{M}$ 
        is irreducible.
    \end{enumerate}
    Then any component $X_0$ of $\mathcal X(M)$ containing 
    $\chi_\rho$ has dimension at least $k$; that is,
    $$\dim (X_0)\geq k.$$
\end{theorem}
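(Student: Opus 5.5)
We follow Thurston's argument: bound the dimension of the representation variety $\calR(M)$ near $\rho$ from below by a presentation count, then subtract the dimension of the conjugation orbit.

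\emph{Step 1: a presentation adapted to the boundary.}
Since $M$ is compact with nonempty boundary, it deformation retracts onto a 2-complex. Following Thurston, we build a handle decomposition of $M$ relative to the boundary: start from $\partial M\times[0,1]$, attach 1-handles to connect the components, then attach 2-handles, and finally one 3-handle is not needed since $\partial M\ne\emptyset$. Euler characteristic bookkeeping uses $\chi(M)=\tfrac12\chi(\partial M)=0$, which holds for non-orientable 3-manifolds as well, since $\chi(\widetilde M)=2\chi(M)$ and $\widetilde M$ is orientable with toral boundary. From this we get a presentation of $\pi_1(M)$ obtained from the free product of the $\pi_1(K^2_i)$, together with $g$ extra free generators, subject to $r$ extra relations, where $g-r=k-1$. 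We expect the main cost here to be a rigorous count; it is the same as Thurston's proof, with tori replaced by Klein bottles, and only $\chi(K^2)=0$ is used.

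\emph{Step 2: local dimension count.}
Near $\rho$, each factor $\calR(K^2_i)$ is smooth of dimension $4$ by Lemma~\ref{Lemma:K^2Smooth}, using hypothesis (a). Hence $\calR(M)$ is cut out of a smooth variety of dimension $4k+3g$ by $r$ matrix equations, each valued in $\SL_2(\BC)$, i.e.\ $3r$ equations. This gives the naive bound $\dim_\rho\calR(M)\ge 4k+3g-3r=4k+3(k-1)$.

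\emph{Step 3: removing one redundant relation.}
The naive bound is not good enough, and the key is to show that one relation is redundant up to codimension. In Thurston's proof, a relation $w$ corresponding to the last 2-handle is a product of conjugates of the other relators and boundary words. At a point where $\rho(w)$ already commutes-constraint is imposed by the others, the equation $\rho(w)=I$ then imposes only one condition instead of three; the constraint is $\tr\rho(w)=2$ near the irreducible locus. Irreducibility, hypothesis (b) (restriction to $\widetilde M$ irreducible, hence $\rho$ has trivial centralizer beyond $\pm I$ and is "non-abelian enough"), is what makes this work.

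We expect this step to be the main obstacle. Non-orientability changes the sign in the Poincaré duality argument: the cup product pairing lives in $H^2(M,\partial M;\BC_{\text{or}})$ with orientation-twisted coefficients. We would try to run the argument on $\widetilde M$ equivariantly, or equivalently verify $H^0(M;\mathfrak{sl}_2)=0$ together with the vanishing of the top twisted cohomology by duality, $H^3(M,\partial M;\mathfrak{sl}_2\otimes\BC_{\text{or}})\cong H_0(M;\mathfrak{sl}_2\otimes \BC_{\text{or}})$. This last group is the coinvariants of the twisted action, and it vanishes precisely when the twisted representation has no invariant vector; hypothesis (b) should guarantee this.

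This redundancy lowers the number of independent equations by $2$, giving
$$\dim_\rho \calR(M)\ge 4k+3(k-1)-3(g-(k-1))\ldots$$
Equivalently, and more cleanly, we would use the cohomological reformulation: the local dimension of $\calR(M)$ is at least $\dim Z^1-\dim H^2$ (obstruction count). From $\chi(M)=0$ we get $\dim H^1-\dim H^2=3\chi\cdot(-1)+\dim H^0$. Half-dimension arguments via the long exact sequence of $(M,\partial M)$, with $\dim H^1(\partial M;\mathfrak{sl}_2)=k$ at irreducible Klein bottle representations, show that the image of $H^1(M)\to H^1(\partial M)$ has dimension at least $k$.

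\emph{Step 4: passing to characters.}
The stabilizer of $\rho$ is finite by (b), so the orbit has dimension $3$ and the bound descends, giving $\dim X_0\ge \dim_\rho\calR(M)-3\ge k$. Concretely, we will prove $\dim_\rho\calR(M)\ge k+3$ in Steps 1--3, mirroring Thurston's count with $\dim\calR(K^2)=4$ playing the role of $\dim\calR(T^2)=4$.
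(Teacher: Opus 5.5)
\textbf{There is a genuine gap: the handle count in Steps 1--2 is wrong, and the step where hypothesis (b) must be used is never carried out.}

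\emph{The count.} In Step 1 you cannot dispense with 3-handles. In a decomposition built on a collar of all of $\partial M$, attaching 1- and 2-handles never makes the upper boundary empty, so at least one 3-handle is needed. (Avoiding 3-handles is possible for an \emph{absolute} decomposition; turned upside down, its $0$-handle becomes a $3$-handle.) With $\chi(M)=\chi(\partial M)=0$ this forces $r=g+k$, i.e.\ $g-r=-k$, not $k-1$. For example, $M=K^2\times I$ gives $g=0$ and $r=2$. Your own numbers are also inconsistent: $4k+3(k-1)\ge k+3$ for every $k\ge 1$, so Step 2 would already finish. The correct naive bound is $4k+3(g-r)=k$, so you must recover a full $3$ dimensions.

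\emph{Where (b) has to enter.} The natural source of those 3 dimensions is the identity among relators carried by the attaching sphere of the 3-handle. In it each 2-handle occurs twice, with opposite or equal signs according as its dual loop preserves or reverses orientation. Its linearization in the relator variables is onto $\mathfrak{sl}_2$ when the twisted coinvariants $H_0(M;\mathfrak{sl}_2\otimes\BC_{\mathrm{or}})\cong H^3(M,\partial M;\mathfrak{sl}_2)$ vanish. Hypothesis (b) gives this vanishing, since a twisted-invariant vector would be fixed by $\mathrm{Ad}\,\rho(\pi_1\widetilde M)$.

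\emph{Your Step 3.} It instead posits a relator imposing only $\tr\rho(w)=2$, a gain of $2$. With the correct count that yields $k+2$, and $k+2$ is sharp without (b). Take $M=K^2\times I$: then $k=2$ and $\dim\calR=4$ near irreducible representations. Here (a) holds but (b) fails, because $\widetilde M=T^2\times I$ has abelian fundamental group. So the third dimension must come from (b), and your proposal never produces it. The displayed estimate in Step 3 is also left unfinished.

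\emph{The cohomological fallback.} It does not repair this as written.
\begin{itemize}
\item The twisted duality is misplaced. The correct statement is $H^3(M,\partial M;\mathfrak{sl}_2)\cong H_0(M;\mathfrak{sl}_2\otimes\BC_{\mathrm{or}})$; the group you wrote is instead untwisted coinvariants.
\item There is no half-dimensionality for $H^1(\partial M;\mathfrak{sl}_2)$ when the boundary consists of Klein bottles. Duality pairs it with the \emph{twisted} group, since $H^2(K^2;\BC)=0$.
\item A statement about the image of $H^1$ concerns tangent spaces and gives no lower bound on dimension anyway.
\end{itemize}
A working version would live on the $H^2$ side:
\begin{itemize}
\item $\chi(M)=0$ gives $\dim Z^1-\dim H^2(M;\mathfrak{sl}_2)=3$.
\item By (a), each $H^2(K^2_i;\mathfrak{sl}_2)$ is $1$-dimensional and the obstructions restrict to zero on $\partial M$.
\item By (b), $H^2(M;\mathfrak{sl}_2)\to H^2(\partial M;\mathfrak{sl}_2)$ is onto, so the obstruction space has dimension $\dim H^2(M)-k$, which gives $k+3$.
\end{itemize}
This still needs a Kuranishi-type local model, which you would have to justify.

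\emph{How the paper avoids this.} It needs neither the 3-handle redundancy nor obstruction theory.
\begin{itemize}
\item It drills orientation-preserving loops $\gamma_i$, chosen using (b) so that $\rho|_{\pi_1(T^2_i\cup\gamma_i)}$ is irreducible.
\item The new boundary surfaces $N_{4,i}$ have $\chi=-2$, so $\chi(M')=-k$, and a plain deficiency count gives $\dim\calR(M')\ge 3k+3$.
\item $\calR(M)$ is the preimage of $\prod_i\calR(K^2_i\cup\gamma_i)$, which is smooth of dimension $7$ by (a), inside $\prod_i\calR(N_{4,i})$, which is smooth of dimension $9$ by irreducibility on the orientation double cover of $N_{4,i}$.
\item Lemma~\ref{Lemma:codims} then costs only $2k$, giving $\dim R_0\ge k+3$.
\end{itemize}
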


\begin{proof}
    For each boundary Klein bottle $K_i^2$, consider an embedded 
    loop $\gamma_i\subset M$
    with base point in $K^2_i$ that intersects $\partial M^3$ only at its base point. 
    In particular,
    $K_i^2\cup \gamma_i\cong K^2\vee S^1$. 
    We fix such $\gamma_i$ so that it is orientation-preserving and
    the restriction of $\rho$ to   
    $\pi_1(T^2_i \cup \gamma_i) $ is irreducible, where $T^2_i$ 
    is the orientable double cover of $K^2_i$. 
    We may assume that the embedded curves $\gamma_1,\dotsc,\gamma_k$ are pairwise disjoint. 
    
    Consider a compact manifold $M'$ obtained from $M$ by drilling out a small neighborhood of each $\gamma_i$. Then its boundary $\partial M'$
    consists of $k$ non-orientable surfaces of genus 4:
    $$
        \partial M'=N_{4,1}\sqcup\dotsb\sqcup N_{4,k}.
    $$ 
    Using the Poincaré duality with $\mathbb{Z}/2\mathbb{Z}$-coefficients, we compute that the Euler 
    characteristic of $M'$ is equal to $-k$:
    $$
    \chi(M')=\frac{1}{2}\chi(\partial M')=\frac{1}{2}\sum_{i=1}^k\chi(N_{4,i})=-k.
    $$
    This means that $M'$ has the homotopy type of a 
    CW-complex with one vertex, $k+1+r$ edges 
    and $r$ faces. Therefore, $\pi_1(M')$ admits a presentation with $k+1+r$ generators and 
    $r$ relations, which yields
    \begin{equation}
        \label{eqn:dimM'}
        \dim \calR(M')\geq 3(k+1+r)-3 r= 3k+3.
    \end{equation}

    Let $R_0$ be the component of $\calR(M)$ containing $\rho$ and corresponding to the component $X_0$ of $\calX(M)$.
    Viewing $\calR(M)$ as a subvariety of $\calR(M')$, we obtain the commutative diagram
    (after dealing with base points):
    \begin{equation}
        \label{eqn:CDR(M)}
        \begin{tikzcd}
            R_0 \subset \calR(M) \arrow[r, hookrightarrow] \arrow[d] &
            \calR(M') \arrow[d, "\mathrm{res}"] \\
            \prod_i \calR(K^2_i\cup \gamma_i) \arrow[r, hookrightarrow] & \prod_i \calR( N_{4,i})
        \end{tikzcd}
    \end{equation}
    where $\mathrm{res}$ denotes the restriction map, and the horizontal arrows are inclusions. 
    Note that
    $$
       \calR(M)=\{\rho\in \calR(M')\mid \mathrm{res}(\rho)\in \prod_i \calR(K^2_i\cup \gamma_i) \} \,.
    $$

    We next require \cite[Lemma~6.12]{PortiDim}, which can be regarded as a local version of \cite[Proposition~1]{FalbelGuilloux}. For the reader’s convenience, we recall \cite[Lemma~6.12]{PortiDim} below.

    \begin{lemma}
    \label{Lemma:codims}
        Let $W$ be a complex (analytic) variety and $W'\subset W$ a complex subvariety,
        both irreducible and smooth.
        Let $X$ be a variety and $f\colon X\to W$ a holomorphic map. Then each 
        irreducible component 
        $Y$ of $ f^{-1}(W')\subset X$ has codimension
        $$
            \mathrm{codim}(Y, X)\leq \mathrm{codim}(W',W).
        $$
    \end{lemma}

To apply Lemma~\ref{Lemma:codims}, one compares the
diagram \eqref{eqn:CDR(M)}
with the following diagram:
\begin{equation}
    \label{eqn:X}
    \begin{tikzcd}
            Y\subset f^{-1}(W')
         \arrow[r, hookrightarrow] \arrow[d, "f"'] &
            X \arrow[d, "f"] \\
            W' \arrow[r, hookrightarrow] & W
        \end{tikzcd}
    \end{equation}
    We verify that the hypotheses of Lemma~\ref{Lemma:codims} are fulfilled:
     \begin{enumerate}
         \item Each $\calR(N_{4,i})$ is smooth and has dimension 9:
         By the choice of $\gamma_i$, $\rho$ restricted to the orientable double cover of
         $N_{4,i}$ is irreducible. Then, by \cite[Proposition~4.2]{PortiDim},
         each $\calR(N_{4,i})$ is smooth and has dimension $-3\chi(N_{4,i})+3=9$.
         \item $\calR(K^2_i\cup \gamma_i)$ is smooth and has dimension 7:
         This assertion follows from Lemma~\ref{Lemma:K^2Smooth} together with the isomorphism 
         $$
         \calR(K^2_i\cup \gamma_i)=\calR(\pi_1(K^2_i)*\mathbb Z)\cong \calR(K^2_i)\times \mathrm{SL}_2(\mathbb C). 
         $$
         \item The map 
         $\calR(K^2_i\cup \gamma_i)\hookrightarrow \calR(N_{4,i})$ is an immersion:
         The tangent map of this inclusion is the map induced at the level of cocycles
         $$Z^1(\pi_1 (K^2_i\cup \gamma_i), \mathfrak{sl}_2(\mathbb C))\to 
         Z^1(\pi_1(N_{4,i}), \mathfrak{sl}_2(\mathbb C)).$$
         This morphism is injective, because it is induced by a surjection
         of fundamental groups
         $ \pi_1(N_{4,i}) \twoheadrightarrow    \pi_1 (K^2_i\cup \gamma_i)$.
         \end{enumerate}
         Then, by applying Lemma~\ref{Lemma:codims}, one has
         $$
            \mathrm{codim}( R_0, \calR(M') )\leq 
            \sum_i \mathrm{codim} (\calR(N_{4,i}) ,\, \calR(K^2_i\cup \gamma_i))  = 2 k.
        $$
        Hence,  with the inequality~\eqref{eqn:dimM'}, one has
        $$
        \dim( R_0)\geq \dim( \calR(M'))-2k\geq 3k+3-2k=k+3.
        $$
        This proves the theorem, because $\dim (X_0)=\dim (R_0)-3 \geq k$.
\end{proof}

\subsection{Proof of Theorem~\ref{thm.sym}}

Recall that the mapping torus  $M_{b \tau} = (D^2 \times [0,1] \setminus b)/_\sim$ is a non-orientable 3-manifold with Klein bottle boundary.
Lemma~\ref{lem.bt} shows that the boundary of $M_{b \tau}$ consists two boundary Klein bottles. Under the assumptions (a) and (b) in the theorem, Theorem~\ref{Theorem:DimensionEstimate} is applied to $M_{b \tau}$, implying that the character variety $\calX(M_{b\tau})$ has a component $X_0$ of dimension at least 2.

Fix a representation $\rho$ of $\pi_1(M_{b\tau})$ with $\chi_\rho \in X_0$ and let $\rho_i$ be the restriction of $\rho$ to $D^2 \times \{i\}$ minus the punctures for $i=0,1$ (so that $\rho_i$'s are representations of a free group). Then $\rho_i$'s are conjugate to each other. Precisely, one has  $$\rho_1 = \rho(a) \,\rho_0 \,\rho(a)^{-1}$$ 
where $a$ is a loop obtained by the quotient of $[0,1]$.
Note that $a$ is one of the generators of the Klein bottle. Lemma~\ref{Lemma:repsK2} shows that $\rho(a^2) = -I$ and thus  $\rho$ induces a representation of $\calG(K)$; recall that the knot $K$ is the closure of $bb^\ast$. As this works for every representation  $\rho$ with $\chi_\rho \in X_0$ and as $X_0$ has dimension at least 2, we conclude that $K$ is $\calX$-large.

\appendix

\section{Parabolic representations}
\label{sec.app}

In this appendix, we discuss \emph{parabolic} representations, namely, representations that send a meridian to a matrix of trace 2. More precisely, we restrict our attention to the subvariety
$$
\calX^{par}(K) = \tr_\mu^{-1}(2) \subset \calX(K)
$$
where $\tr_\mu : \calX(K) \rightarrow \BC$ denotes the trace function associated with a meridian of a knot $K$. If $K$ is $\calX$-large, i.e., if $\dim \calX(K) >1$, then clearly $\dim \calX^{par}(K) >0$. Therefore, a necessary condition for $K$ to be $\calX$-large is that $\calX^{par}(K)$ has positive dimension. 

In \cite{diagramsite}, $\calX^{par}(K)$ is explicitly computed for knots $K$ with at most 12 crossings. Knots $K$ for which $\calX^{par}(K)$ has positive dimension are as follows.
$$
10_{98}, \, 10_{99},\,  10_{123}, \, 11a_{43}, \, 11a_{44}, \, 11a_{47}, \, 11a_{57},\,  \ldots.
$$
To give a topological explanation for this phenomenon of positive dimension,  we modify Theorem~\ref{thm.replace} as follows.

\begin{theorem}\label{thm.replace2}
Let $T$ be a link with two components $T_1$ and $T_2$ and  $L$ the split link consisting of $T$ together with an unknot $O$.
Assume that at least one of the $T_i$---say $T_1$---is nontrivial and that there is a diagram of $L$ such that
\begin{itemize}
    \item $T_2$ meets $T_1$ only along a single arc of $T_1$\footnote{An arc of $T_1$ means an arc of the diagram of $T_1$ obtained by ignoring the other components $T_2$ and $O$.}, and
    \item there is a crossing $c_i$ at which $T_i$ meets the unknot $O$ for $i=1,2$.
\end{itemize}
Suppose that 
\begin{itemize}
    \item a knot $K$ is obtained by replacing the crossing $c_i$ with a rational $2$-tangle diagram $R_i$, for $i=1,2$, and
    \item the $c_i$-closure of $R_i$ is a non-trivial (two-bridge) knot for $i=1,2$.
\end{itemize}
Then $\calX^{par}(K)$ has positive dimension.
\end{theorem}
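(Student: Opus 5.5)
We adapt the proof of Theorem~\ref{thm.replace}, now with the meridian traces fixed at $2$. We build a $1$-parameter family of parabolic representations of $\calG(K)$ whose characters are pairwise distinct.

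\emph{Step 1: the setup.} Let $R_{c_i}$ be the two-bridge knot given by the $c_i$-closure of $R_i$, and let $P_i$ be its Riley polynomial. Let $g_i,h_i$ be the Wirtinger generators above $c_i$, where $g_i$ is a meridian of $T_i$ and $h_i$ is a meridian of $O$. By Riley's theorem, $R_{c_i}$ has a non-abelian parabolic representation. So for each $i$ we can fix a root $u_i\neq 0$ of $P_i(2,\cdot)$, that is, a value of $\tr H_iG_i$ at which the pair $(G_i,H_i)$ of parabolic matrices with $\tr(H_iG_i)=u_i$ satisfies the Riley relation. As in the proof of Theorem~\ref{thm.replace}, that relation is exactly what lets the crossing $c_i$ be replaced by $R_i$.

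\emph{Step 2: building the representation.} Since $L$ is split, a representation of $\calG(L)$ is the same thing as a representation $\rho_T$ of $\calG(T)$ together with one free matrix $Q=\rho(h)$ for the meridian of $O$. All Wirtinger generators of $O$ are conjugates of $h$ by words in the generators of $T$; this is read off from the diagram, since $O$ crosses $T$ only at the replaced crossings and elsewhere only passes over or under itself or $T$. The Riley relation at $c_i$ is a single trace condition $\tr(\rho(h_i)G_i)=u_i$. Here $\rho(h_i)=W_iQW_i^{-1}$, where $W_i$ is the appropriate word. Choose $\rho_T$ parabolic, with $T_1$ nontrivial so that $\rho_T|_{T_1}$ can be taken irreducible. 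The unknowns are then the parabolic matrix $Q$, which has $2$ parameters, together with the parabolic representations $\rho_T$. Modulo overall conjugation, we impose two conditions, one for each $i$.

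The hypothesis that $T_2$ meets $T_1$ only along a single arc is what I would use to get a free parameter on the $T$ side. Along that arc, the conjugation action of the centralizer of the meridian of that arc can be applied to the whole of $T_2$ while $T_1$ is kept fixed. This is the analogue of the matrices $A$ in \eqref{eqn.A}, with $A$ now parabolic, $A=\begin{pmatrix}1&t\\0&1\end{pmatrix}$. Indeed, $T_2$ crosses $T_1$ only as a whole through that single arc, so conjugating every generator of $T_2$ by $A$, which commutes with the arc's meridian, preserves all the Wirtinger relations. This gives the parameter $t$.

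\emph{Step 3: counting dimensions.} We now have three parameters: $t$ and the $2$ parameters of $Q$ (with $\rho_T|_{T_1}$ fixed, so conjugation is already used up). We impose $2$ equations. Hence the solution set has dimension at least $1$, provided it is nonempty. Nonemptiness I would get by first solving the condition at $c_1$ with $Q$, exactly as in Theorem~\ref{thm.replace}, leaving a $1$-parameter family of $Q$ through conjugation by the centralizer of $G_1$. Then I would use $t$ to solve the condition at $c_2$; the function $t\mapsto\tr(\rho(h_2)A G_2A^{-1})$ is a nonconstant polynomial in $t$ as long as $G_2$ and $\rho(h_2)$ do not share a fixed point.

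\emph{Step 4: distinct characters (main obstacle).} The main obstacle is showing that the resulting curve is not collapsed by conjugation, i.e.\ that its characters really vary. As in Theorem~\ref{thm.replace}, I would use the irreducibility of $\rho_T|_{T_1}$. It provides an element $g'\in\calG(T_1)$ whose fixed point differs from that of $G_1$. Then $\tr(G'\rho(h_1))$ varies as $Q$ moves by the centralizer of $G_1$. Since $\rho_T|_{T_1}$ is held fixed, no nontrivial conjugation can identify distinct members of the family. I expect the genericity checks here to be the delicate part: that the parameter does not degenerate, and that the two conditions are independent rather than one forcing the family to be finite.
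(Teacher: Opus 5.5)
\textbf{There is a genuine gap: the source of your third parameter.} Your $t$ is a \emph{bending} parameter. You start from some parabolic $\rho_T$ and conjugate its $T_2$-part by $A\in C(\rho(g))$.

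\begin{itemize}
\item Since $\rho(g)$ is parabolic, $C(\rho(g))=\{\pm\begin{pmatrix}1&s\\0&1\end{pmatrix}\}$ (in a basis where $\rho(g)$ fixes $\infty$), which is abelian. So whenever $\rho_T$ sends the $T_2$-side into $C(\rho(g))$, your conjugation does nothing.
\item This is exactly the natural case. You never say how to extend the irreducible $\rho_{T_1}$ to $\calG(T)$, and the obvious extension puts $T_2$ into $C(\rho(g))$.
\item Sometimes this is forced. If $T_2$ is a small unknot encircling the arc once, its meridian commutes with $g$. Then every generator of $T_2$, and every generator along the arc, lies in $C(\rho(g))$, and the bending is trivial.
\item In that situation you have only the two parameters of $Q$ against the two conditions at $c_1$ and $c_2$. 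Solving at $c_1$ leaves the one-parameter $C(G_1)$-orbit of $Q$. The condition at $c_2$ then generally cuts this curve to finitely many points, so you do not get positive dimension.
\end{itemize}

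Your nonconstancy criterion in Step~3 is also off. The map $t\mapsto\tr(\rho(h_2)AG_2A^{-1})$ is constant as soon as $G_2$ (or $\rho(h_2)$) commutes with $A$, i.e.\ fixes the fixed point of $\rho(g)$. Whether $G_2$ and $\rho(h_2)$ share a fixed point is not the relevant condition.

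\textbf{The missing idea, as in the paper:} let $t$ parametrize the \emph{image} of $T_2$, not a conjugation of it.

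\begin{itemize}
\item Send every Wirtinger generator of $T_2$ to one parabolic $A_t\in C(\rho(g))$, $A_t\neq I$.
\item Because $T_2$ meets $T_1$ only along the arc with meridian $g$, all Wirtinger relations hold: the generators along that arc map to $\rho(g)$, and those of $T_2$ map to $A_t$.
\item This gives both the extension of $\rho_{T_1}$ to $\calG(T)$ and a genuine free parameter.
\item With the two parameters $(x,y)$ of $Q$, the condition at $c_1$ involves only $(x,y)$. The condition at $c_2$, $\tr(\rho(h_2)A_t)=u_2$, now genuinely depends on $t$. For instance, it is affine in $t$ with nonzero slope when $\rho(h_2)$ does not depend on $t$ and does not fix the fixed point of $\rho(g)$. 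So it can be solved for $t$ along the curve of solutions at $c_1$.
\end{itemize}

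This gives three unknowns against two equations, with a nonempty solution set. Your Step~4 argument for distinct characters, using the irreducibility of $\rho_{T_1}$ and moving $Q$ by $C(G_1)$ while $\rho_{T_1}$ stays fixed, then applies unchanged.
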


\begin{proof}
For $X \neq \pm I \in \mathrm{SL}_2(\BC)$, we denote by $C(X)$ the centralizer
\[
C(X)=\{ A \in \mathrm{SL}_2(\BC) \mid AX = XA \}.
\]
It is well known that $C(X)$ is one-dimensional, and that if $\tr(X)=\pm 2$, then every element of $C(X)$ also has trace $\pm 2$ (see, for example, Equation~\eqref{eqn.A}).

By assumption, $T_2$ intersects $T_1$ along a single arc. Let $g$ be the Wirtinger generator corresponding to this arc. This allows us to extend any parabolic representation $\rho$ of $\calG(T_1)$ to a parabolic representation of $\calG(T)$ by sending
\[
(\text{every Wirtinger generator on } T_2) \longmapsto A \in C(\rho(g)),
\]
where $A \in C(\rho(g))$ satisfies $\tr(A)=2$. Such choices of $A$ form a 1-dimensional family,  parameterized by a single variable, say $t$.
Extending the representation further to $\calG(L)=\calG(T \sqcup O)$, we may assign an arbitrary trace--$2$ matrix to a meridian of the unknot $O$. This introduces two additional free variables, say $x$ and $y$; explicitly, the trace--$2$ matrices can be parameterized by $x$ and $y$ as
$$ (x,y) \longleftrightarrow \begin{pmatrix}
    1 + x y & x^2 \\
    -y^2 & 1-xy 
\end{pmatrix}.$$

We now impose the conditions required for replacing each crossing $c_i$ by the tangle $R_i$, for $i=1,2$. The replacement of $c_1$ by $R_1$ yields an equation involving only $x$ and $y$, while the replacement of $c_2$ by $R_2$ yields an equation involving $t$, $x$, and $y$. It follows that the solution space of these two equations has dimension at least one. Since each solution induces a parabolic representation of $K$, we conclude that $\calX^{p}(K)$ has positive dimension.
\end{proof}

Figure~\ref{fig.11a43} presents several examples with 11 crossings that can be obtained from Theorem~\ref{thm.replace2} where $T$ consists of the trefoil knot $3_1$ together with a linked unknot. It is natural to ask whether these knots are $\calX$-large; we leave this question for future work.

\begin{figure}[!htbp]
    \centering
    \scalebox{0.9}{
\begingroup%
  \makeatletter%
  \providecommand\color[2][]{%
    \errmessage{(Inkscape) Color is used for the text in Inkscape, but the package 'color.sty' is not loaded}%
    \renewcommand\color[2][]{}%
  }%
  \providecommand\transparent[1]{%
    \errmessage{(Inkscape) Transparency is used (non-zero) for the text in Inkscape, but the package 'transparent.sty' is not loaded}%
    \renewcommand\transparent[1]{}%
  }%
  \providecommand\rotatebox[2]{#2}%
  \newcommand*\fsize{\dimexpr\f@size pt\relax}%
  \newcommand*\lineheight[1]{\fontsize{\fsize}{#1\fsize}\selectfont}%
  \ifx\svgwidth\undefined%
    \setlength{\unitlength}{424.33844619bp}%
    \ifx\svgscale\undefined%
      \relax%
    \else%
      \setlength{\unitlength}{\unitlength * \real{\svgscale}}%
    \fi%
  \else%
    \setlength{\unitlength}{\svgwidth}%
  \fi%
  \global\let\svgwidth\undefined%
  \global\let\svgscale\undefined%
  \makeatother%
  \begin{picture}(1,0.49765064)%
    \lineheight{1}%
    \setlength\tabcolsep{0pt}%
    \put(0.0714535,0.00390798){\color[rgb]{0,0,0}\makebox(0,0)[lt]{\lineheight{1.25}\smash{\begin{tabular}[t]{l}$11a_{43}$\end{tabular}}}}%
    \put(0.611513,0.00625933){\color[rgb]{0,0,0}\makebox(0,0)[lt]{\lineheight{1.25}\smash{\begin{tabular}[t]{l}$11a_{47}$\end{tabular}}}}%
    \put(0.33651683,0.00479855){\color[rgb]{0,0,0}\makebox(0,0)[lt]{\lineheight{1.25}\smash{\begin{tabular}[t]{l}$11a_{44}$\end{tabular}}}}%
    \put(0.86197658,0.00692881){\color[rgb]{0,0,0}\makebox(0,0)[lt]{\lineheight{1.25}\smash{\begin{tabular}[t]{l}$11a_{57}$\end{tabular}}}}%
    \put(0,0){\includegraphics[width=\unitlength,page=1]{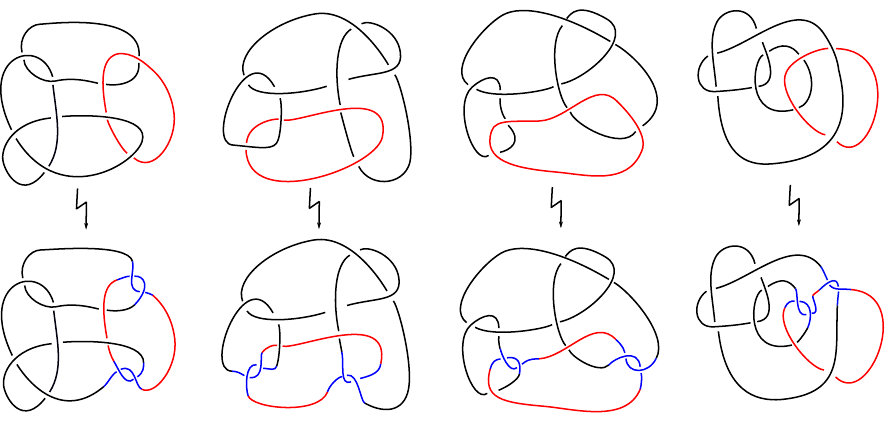}}%
  \end{picture}%
\endgroup%
}
    \caption{Examples of knots $K$ for which $\calX^{par}(K)$ has positive dimension.}
    \label{fig.11a43}
\end{figure}

\subsection*{Acknowledgment}
The first and third authors were supported by the National Research Foundation of Korea (NRF) grant funded by the Ministry of Education (No. RS-2024-00442775).
The second author acknowledges support by grant PID2021-125625NB-100 funded by MICIU/AEI/10.13039/501100011033 and by ERDF/EU,  and by the
Mar\'\i a de Maeztu Program CEX2020-001084-M.


\bibliographystyle{alpha}
\bibliography{biblio}

\end{document}